\newcommand{\N}{\mathbb N}
\newcommand{\Z}{\mathbb Z}
\newcommand{\R}{\mathbb R}
\newcommand{\C}{\mathbb C}
\renewcommand{\P}{\mathbb P}
\newcommand{\diam}{\operatorname{diam}}
\newcommand{\conv}{\operatorname{conv}}
\newcommand{\K}{\mathcal K} 
\renewcommand{\C}{\mathcal C} 
\renewcommand{\P}{\mathcal P} 
\renewcommand{\S}{\mathcal S} 
\crefname{hypothesis}{Hypothesis}{Hypotheses}
\title{Differential geometry with extreme eigenvalues in the positive semidefinite cone\thanks{Submitted to the editors DATE.
\funding{C.M. was supported by a Presidential Postdoctoral Fellowship at Nanyang Technological University (NTU Singapore) and an Early Career Research Fellowship at the University of Cambridge.
G.V.G. was supported by the UCL Centre for Doctoral Training in Data Intensive Science funded by STFC, and by an Overseas Research Scholarship from UCL. The research leading to these results has also received funding from the European Research Council under the
Advanced ERC Grant Agreement SpikyControl n.101054323.}}}
\author{Cyrus Mostajeran\thanks{School of Physical and Mathematical Sciences, Nanyang Technological University, Singapore.}
\and Nathaël Da Costa\footnotemark[2]
\and Graham Van Goffrier\thanks{Department of Physics and Astronomy, University College London, London, UK.}
\and Rodolphe Sepulchre\thanks{Department of Engineering, University of Cambridge, UK \& Department of Electiral Engineering, KU Leuven, Belgium.}}
\begin{document}

\maketitle

\begin{abstract}
Differential geometric approaches to the analysis and processing of data in the form of symmetric positive definite (SPD) matrices have had notable successful applications to numerous fields including computer vision, medical imaging, and machine learning. The dominant geometric paradigm for such applications has consisted of a few Riemannian geometries associated with spectral computations that are costly at high scale and in high dimensions. We present a route to a scalable geometric framework for the analysis and processing of SPD-valued data based on the efficient computation of extreme generalized eigenvalues through the Hilbert and Thompson geometries of the semidefinite cone. We explore a particular geodesic space structure based on Thompson geometry in detail and establish several properties associated with this structure. Furthermore, we define a novel inductive mean of SPD matrices based on this geometry and prove its existence and uniqueness for a given finite collection of points. Finally, we state and prove a number of desirable properties that are satisfied by this mean.
\end{abstract}

\begin{keywords}
  affine-invariance, convex cones, differential geometry, geodesics, geometric statistics, Hilbert metric, positive definite matrices, matrix means, Thompson metric
\end{keywords}

\begin{AMS}
  15B48, 53B50, 53C22, 53C80, 65F15 
\end{AMS}

\section{Introduction} \label{sec:introduction}

Geometric data that lie in convex cones appear in a wide variety of applications. Of particular interest is the space of symmetric positive definite (SPD) matrices of a given dimension, which forms the interior of the convex cone of positive semidefinite matrices in the corresponding vector space of symmetric matrices.
In medical imaging, SPD matrices model the covariance matrices of Brownian motion of water in Diffusion Tensor Imaging (DTI) \cite{Pennec2006}. In radar data processing, circular complex random processes with a null mean are characterized by Toeplitz Hermitian positive definite matrices \cite{Arnaudon2013}. In the context of brain-computer interfaces (BCI), where the objective is to enable users to interact with computers via brain activity alone (e.g. to enable communication for severely paralyzed users), the time-correlation of electroencephalogram (EEG) signals are encoded by SPD matrices \cite{Barachant2012}. SPD matrices appear as kernel matrices in machine learning \cite{Lanckriet2004}. SPD representations also find applications in process control, monitoring, and anomaly
detection \cite{Feng2014,Smith2022,Wise1996}, object detection \cite{Tuzel2006,Xu2016}, and the study of functional brain networks \cite{Goni2014,Sporns2002}.

Since SPD matrices do not form a vector space, standard linear analysis techniques applied directly to such data may be inappropriate in some contexts and known to result in poor performance. For instance, the regularization of DTI images using gradient descent algorithms that utilize the classical Euclidean (Frobenius) norm almost inevitably lead to points in the image with negative eigenvalues. Even if we remain in the SPD cone, use of Euclidean (linear) geometry often results in other problems such as `swelling' phenomena in interpolation in DTI \cite{Log-Euclidean2006,Pennec2006} or poor classification results in the context of BCI \cite{Barachant2012,BARACHANT2013,Congedo2017}.

In order to cope with these problems, several Riemannian geometries on SPD matrices have been proposed and used effectively in a variety of applications in computer vision \cite{Huang2018,Jayasumana2015,Minh2017}, medical data analysis \cite{Log-Euclidean2006, Pennec2006, Pennec2020}, machine learning \cite{Cheerian2017,Miolane2020,Zadeh2016}, and optimization \cite{Absil2009, Boumal14, boumal_2023, Mishra2013}. In particular, the affine-invariant Riemannian metric---so-called because it is invariant to affine transformations of the underlying spacial coordinates---has received considerable attention in recent years and applied successfully to problems such as EEG signal processing in BCI where it has been shown to be superior to classical techniques based on feature vector classification \cite{Barachant2012,BARACHANT2013,Congedo2017}. More recently, geometric deep learning architectures have been proposed to learn statistical representations of SPD-valued data that respect the underlying Riemannian geometry \cite{Brooks2019,Huang2017,Ju2022}. The affine-invariant Riemannian geometry has also been applied in the field of geometric statistics where it has been used to construct Riemannian Gaussian distributions, which are used as building blocks for learning models that describe the structure of statistical populations of SPD matrices \cite{Chen2022,Said2017,Said2018,Said2023,HOS2022,Tupker2021}.

The affine-invariant Riemannian metric endows the space of SPD matrices of a given dimension with the structure of a Hadamard manifold with non-constant negative curvature \cite{Lang1999}. Computing standard geometric objects such as distances, geodesics, Riemannian exponentials and logarithms in this geometry often amounts to the computation of the generalized eigenspectrum of a pair of SPD matrices, which typically means a significant increase in computational complexity, particularly for larger matrices. In particular, the algorithms for computing the affine-invariant Riemannian geodesic between two SPD matrices of moderate size, often interpreted as the weighted geometric mean, become unfeasible for large matrices \cite{Iannazo2016}. More recently, there have been successful efforts in developing scalable algorithms for the computation of the product of the weighted geometric mean and a vector, with applications to the domain decomposition preconditioning of PDEs \cite{Arioli2012} and clustering of signed complex networks \cite{Fasi2018,Mercado2016}. While these methods can be highly effective in computing the action of the weighted geometric mean on a vector, they do not typically provide a scalable algorithm for the construction of the full matrix.

An important point that has not received much attention in the literature on geometric optimization and statistics involving SPD-valued data is that there are natural non-Riemannian geometries that can be associated with SPD matrices based on the conic structure of the space. In particular, the Hilbert and Thompson metrics \cite{Baggio2019,Lemmens2012, Mostajeran2020,Nielsen2019,Thompson1963} on the cone of SPD matrices generate non-Euclidean geometries with a rich set of properties including distance and geodesic computations that rely only on extreme generalized eigenvalues \cite{Mostajeran2020,VanGoffrier2021}, which are efficiently computable using techniques such as Krylov subspace methods based on matrix-vector products \cite{Ge2016,Golub2000,Stewart2002,Sun2016}. The full utilization of non-Euclidean geometries that are naturally suited to the SPD cone in the design of cost functions and optimization algorithms for problems involving SPD-valued data offers the potential for enhanced analytic insights and dramatic improvements in computational efficiency over existing costly Riemannian methods. 

\subsection{Hilbert and Thompson geometries}

Let $V$ be a finite-dimensional real vector space. A subset $K$ of $V$ is called a cone if it is convex, $\mu K \subseteq K$ for all $\mu \geq 0$, and $K\cap(-K)=\{0\}$. It is said to be a closed cone if it is a closed set in $V$ with respect to the standard topology. A cone is said to be solid if it has non-empty interior. We say that a cone is almost Archimedean if the closure of its restriction to any two-dimensional subspace is also a cone. Examples of solid closed cones include the positive orthant $\mathbb{R}_{+}^n=\{(x_1,\dotsc,x_n)\in\mathbb{R}^n:x_i\geq 0, \; 1\leq i \leq n\}$ and the set of positive semidefinite matrices in the space of real $n\times n$ matrices.

A cone $K$ in a vector space $V$ induces a partial ordering on $V$ given by $x\leq y$ if and only if $y-x\in K$. For each $x\in K\setminus \{0\}$, $y\in V$, define $M(y/x):=\inf\{\lambda\in\mathbb{R}:y\leq \lambda x\}$. Hilbert's projective metric on $K$ is defined to be
\begin{equation} \label{Hilbert metric}
    d_H(x,y)=\log(M(y/x)M(x/y)).
\end{equation}
Hilbert's projective metric is a pseudo-metric on the cone since it can be shown that $d_H(x,y)=0$ if and only if $x=\lambda y$ for some $\lambda>0$. Indeed, $d_H$ defines a metric on the space of rays of the cone \cite{Lemmens2012}. A specific example of Hilbert geometry is $n$-dimensional hyperbolic space, which is isometric to the the Lorentz cone $\{(t,x_1,\dotsc,x_n)\in\mathbb{R}^{n+1}:t^2>x_1^2+\dotsc+x_n^2\}$ endowed with its Hilbert metric. However, Hilbert geometry only corresponds to a CAT(0) space if the cone is Lorentzian \cite{Bridson2013}. Thus, Hilbert geometry is certainly more general than hyperbolic geometry. Beyond geometry, Hilbert's projective metric finds important applications in analysis, where many naturally arising linear and nonlinear maps are either non-expansive or contractive with respect to it \cite{Birkhoff1957,Bushell1973,Lemmens2012,Sepulchre2010CDC}.

Thompson's part metric on $K$ is a closely related metric that is defined to be
\begin{equation} \label{Thompson metric}
    d_T(x,y)=\log(\max\{M(y/x),M(x/y)\}).
\end{equation}
Two points in $K$ are said to be in the same part if the distance between them is finite in the Thompson metric. If $K$ is almost Archimedean, then each part of $K$ is a complete metric space with respect to the Thompson metric \cite{Thompson1963}. 

Turning our attention to the case of the positive semidefinite cone, we find that for strictly positive definite matrices $X, Y\succ 0$, $M(Y/X)=\lambda_{\max}(YX^{-1})=1/\lambda_{\min}(XY^{-1})$, where $\lambda_{\max}(A)$ and $\lambda_{\min}(A)$ denote the maximum and minimum eigenvalues of the matrix $A$, respectively. Note that $\lambda_{\max}(YX^{-1})$ is well-defined since $YX^{-1}$ is a diagonalizable matrix with real and positive eigenvalues. It follows that the Hilbert and Thompson metrics take the form
\begin{equation} \label{Hilbert matrix}
    d_H(X,Y)=\log\left(\frac{\lambda_{\max}(YX^{-1})}{\lambda_{\min}(YX^{-1})}\right) 
\end{equation}
and
\begin{equation} \label{Thompson matrix}
    d_T(X,Y)=\log\left(\max\{\lambda_{\max}(YX^{-1}),1/\lambda_{\min}(YX^{-1})\}\right).
\end{equation}

\subsection{Paper organization and contributions}

The main aim of this paper is to provide a connection between the differential geometry of SPD matrices—which has been the subject of significant research interest in recent years accompanied by notable successful applications—and numerical linear algebra, specifically iterative methods for computing extreme eigenvalues—a cornerstone of modern applied mathematics and computing. In this paper, the Hilbert and Thompson geometries of the semidefinite cone are used as a route to establish such a connection. 

In \cref{sec:metrics}, we review affine-invariant metric geometry in the SPD cone and observe how the Thompson metric arises naturally as a member of a family of affine-invariant metrics generated by a collection of Finsler metrics. In \cref{sec:geodesics}, we consider geodesics in Thompson geometry and a choose a particular geodesic with attractive computational properties as a distinguished geodesic whose properties we examine closely. In \cref{sec:inductive mean}, we introduce a novel inductive mean of any finite collection of SPD matrices as the limit of a sequence that is generated through constructions of Thompson geodesics (\cref{alg:inductive mean}) that can be efficiently computed in high dimensions using extreme generalized eigenvalues. We prove that this novel inductive mean of SPD matrices is well-defined by showing that any sequence generated by \cref{alg:inductive mean} converges to a unique point that is independent of the choice of initialization (\cref{cvgence_thm}) and the ordering of the SPD matrices. Furthermore, we state and prove a number of desirable properties that are satisfied by this mean in \cref{properties_thm}.

\section{Affine-invariant metric geometry} \label{sec:metrics}

Let $\mathbb{S}^n_{++}$ denote the space of $n\times n$ real symmetric positive definite matrices. It is well-known that $\mathbb{S}^n_{++}$ admits a Riemannian distance function $d_2:\mathbb{S}^n_{++}\times\mathbb{S}^n_{++}\rightarrow \mathbb{R}$ 
\begin{equation} \label{R distance}
d_2(X,Y)=\left(\sum_{i=1}^n\log^2\lambda_i(YX^{-1})\right)^{1/2},
\end{equation}
where $\lambda_i(YX^{-1})=\lambda_i(X^{-1/2}YX^{-1/2})$ denote the $n$ real and positive eigenvalues of $YX^{-1}$. \cref{R distance} endows $\mathbb{S}_{++}^n$ with the structure of a Riemannian symmetric space and a metric space of nonpositive curvature \cite{HOS2022}. It can be viewed as a Riemannian extension of the logarithmic distance between positive scalars $d(x,y)=|\log(y/x)|$ to positive definite matrices \cite{Bonnabel2010,LimSep2019,Mostajeran2020} and possesses a number of remarkable symmetries that lie behind its utility in a variety of applications including brain-computer interfaces \cite{Barachant2012,BARACHANT2013,Congedo2017,Ju2022}, computer vision \cite{Huang2018},  medical imaging \cite{Log-Euclidean2006,Pennec2006}, radar signal processing \cite{Arnaudon2013}, statistical inference \cite{Said2017,Said2018}, and machine learning \cite{Huang2017,Zadeh2016}. These symmetries include affine-invariance, i.e., invariance under congruence transformations:
$d_2(X,Y)=d_2(AXA^T,AYA^T)$ for any invertible $A\in\mathrm{GL}(n,\mathbb{R})$, where $A^T$ denotes the transpose of $A$ \cite{FLETCHER2007,MostajeranGSI2019,MostajeranGSI2017,Mostajeran2018,Pennec2006,ThanwerdasGSI2019}. Another key symmetry satisfied by this metric is invariance under matrix inversion: $d_2(X,Y)=d_2(X^{-1},Y^{-1})$.

While the Riemannian distance \cref{R distance} has been the subject of significant research interest due to its symmetries and use in applications, it should be noted that it is only one member of a family of distance functions on $\mathbb{S}^n_{++}$ that enjoy the same properties. Indeed, the distances $d_{\Phi}$ on $\mathbb{S}^n_{++}$ defined as 
\begin{equation} \label{distance family}
    d_{\Phi}(X,Y)=\|\log X^{-1/2}YX^{-1/2}\|_{\Phi},
\end{equation}
where $\|\cdot\|_{\Phi}$ is an orthogonally invariant norm on the space of $n\times n$ symmetric matrices given by $\|Z\|_{\Phi}=\Phi(\lambda_1(Z),\cdots,\lambda_n(Z))$, $\lambda_i(Z)$ denote the eigenvalues of $Z$, and $\Phi$ is a symmetric gauge function on $\mathbb{R}^n$, are affine-invariant and inversion-invariant distances \cite{Bhatia2003}. The symmetric gauge functions corresponding to the $l_p$-norms in $\mathbb{R}^n$ induce the Schatten $p$-norms $\|\cdot\|_{\Phi}$ for $1\leq p \leq \infty$. If we take $\Phi(x_1,\cdots,x_n)=(\sum_i x_i^2)^{1/2}$, $d_{\Phi}$ yields the Riemannian distance function \cref{R distance}, whereas the choice of $\Phi(x_1,\cdots,x_n)=\max_i |x_i|$ yields the Thompson metric \cref{Thompson matrix}, which can equivalently be expressed as
\begin{equation} \label{d inf}
    d_{\infty}(X,Y)=\max_{1\leq i \leq n}|\log\lambda_i(YX^{-1})|=\max\{\log\lambda_{\max}(YX^{-1}),\log\lambda_{\max}(XY^{-1})\}.
\end{equation}
The form of the right-hand side of \cref{d inf} is of computational significance since it only involves the computation of the largest generalized eigenvalues of the pairs $(X,Y)$ and $(Y,X)$. Thus, we see that the Thompson metric is both affine-invariant and inversion-invariant. 

The space $\mathbb{S}^n_{++}$ is an open subset of the vector space of $n\times n$ real symmetric matrices and inherits a natural structure of a real differentiable manifold as a result.
From a differential viewpoint, the distance functions $d_{\Phi}$ are induced by affine-invariant Finsler metrics on $\mathbb{S}^n_{++}$ given by the norm $\|d\Sigma\|_{\Sigma,\Phi} := \|\Sigma^{-1/2}d\Sigma\Sigma^{-1/2}\|_{\Phi}$ defined on the tangent space at $\Sigma\in \mathbb{S}^n_{++}$. In particular, the Thompson distance $d_{T}(X,Y)$ is induced by the norm
\begin{equation}
    \|d\Sigma\|_{\Sigma}=\inf\{\alpha>0: -\alpha\Sigma \leq d\Sigma \leq \alpha\Sigma\}
\end{equation}
and is recovered by minimizing the length 
\begin{equation}
    L[\gamma]=\int_0^1\|\gamma'(t)\|_{\gamma(t)}dt
\end{equation}
over all piecewise $C^1$ curves $\gamma:[a,b]\rightarrow \mathbb{S}^n_{++}$ with $\gamma(0)=X$ and $\gamma(1)=Y$ \cite{Nussbaum1994}. The Hilbert metric is recovered through a similar procedure by replacing the above norm with the semi-norm $\|d\Sigma\|_{\Sigma}=M(d\Sigma/\Sigma)-m(d\Sigma/\Sigma)$, where $M(d\Sigma/\Sigma)=\inf\{\lambda\in\mathbb{R}:d\Sigma\leq \lambda \Sigma\}$ and $m(d\Sigma/\Sigma)=\sup\{\lambda\in\mathbb{R}:d\Sigma\geq \lambda \Sigma\}$ \cite{Nussbaum2004}. Various unit balls centered on the identity matrix in these affine-invariant geometries are depicted in \cref{fig:Thompson balls}
in the case of $2\times 2$ SPD matrices visualized as the interior of a convex cone $\{(a,b,c)\in\mathbb{R}^3: a \geq 0, \, ac-b^2 \geq 0\}$.

\begin{figure} 
    \centering
    \includegraphics[width=1\linewidth]{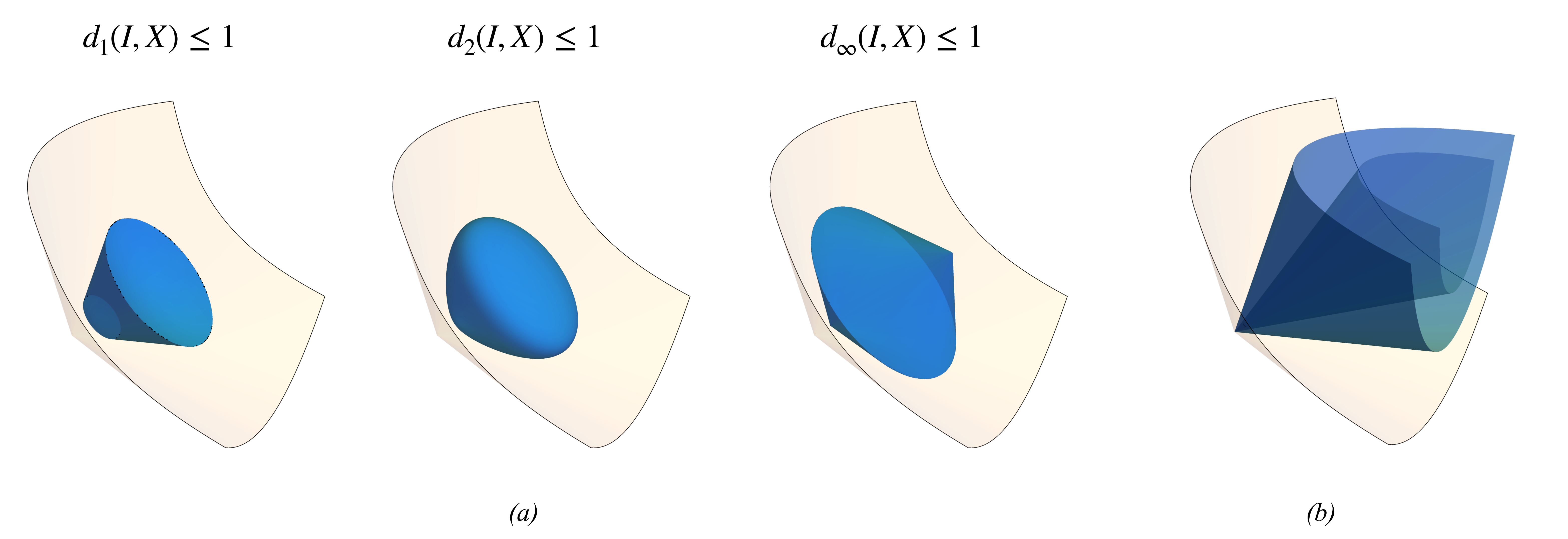}
    \caption{(a) Unit balls $d_{\Phi}(I,X)\leq 1$ in the affine-invariant geometries induced by the gauge functions $\Phi$ corresponding to the $l_1$-, $l_2$-, and $l_{\infty}$-norms in $\mathbb{R}^2$ visualized as points in the interior of the closed convex cone $\{(a,b,c)\in\mathbb{R}^3: a\geq 0, \, ac-b^2 \geq 0\}$, which we identify with the set of $2\times 2$ SPD matrices. Note that $d_{\infty}$ corresponds to the Thompson metric. (b) The sets $d_{H}(I,X)\leq 1/2$ and $d_H(I,X)\leq 1$ in Hilbert's projective metric applied to $2\times 2$ SPD matrices visualized in $\mathbb{R}^3$.}
    \label{fig:Thompson balls}
\end{figure}

\section{Geodesics} \label{sec:geodesics}

A geodesic path in a metric space $(M,d)$ is a map $\gamma:I\rightarrow (M,d)$ such that $d(\gamma(s),\gamma(t))=|s-t|$ for all $s,t\in I$, where $I\subseteq \mathbb{R}$ is a (possibly unbounded) interval. The image of a geodesic path is called a geodesic and a metric space is said to be a geodesic space if there exists a geodesic path joining any two points. Each of the metric spaces $(\mathbb{S}_{++}^n,d_{\Phi})$ with $d_{\Phi}$ defined in \cref{distance family} is a geodesic space. Indeed, the curve $\gamma:[0,1]\rightarrow \mathbb{S}^n_{++}$ defined by
\begin{equation} \label{Riemannian geodesic}
\gamma(t)= X\#_t Y \vcentcolon = X^{1/2}(X^{-1/2}YX^{-1/2})^t X^{1/2}
\end{equation}
is a geodesic path from $X$ to $Y$ in each of these metric spaces and is unique provided that the geodesics in $\mathbb{R}^n$ induced by $\Phi$ are unique \cite{Bhatia2003,Lang1999}. Thus, uniqueness of geodesics in $(\mathbb{S}_{++}^n,d_{\Phi})$ is inherited from $\mathbb{R}^n$ when $\Phi$ corresponds to the $l_p$-norms for $1<p<\infty$, but not for $p=1,\infty$. 

In general, the Thompson metric does not admit unique geodesic paths between points. Indeed, a construction by Nussbaum in \cite{Nussbaum1994} describes a family of geodesics that generally consists of an infinite number of curves connecting a pair of points in a cone $K$. In particular, setting $\alpha:=1/M(x/y;K)$ and $\beta:=M(y/x;K)$, the curve $\phi:[0,1]\rightarrow K$ given by
\begin{equation} \label{Nussbaum geodesic}
\phi(t;x,y)= x*_t y \vcentcolon = \begin{dcases}
\left(\frac{\beta^t-\alpha^t}{\beta-\alpha}\right)y+\left(\frac{\beta\alpha^t-\alpha\beta^t}{\beta-\alpha}\right)x \quad &\mathrm{if} \; \alpha\neq\beta, \\
\alpha^t x &\mathrm{if} \; \alpha=\beta,
\end{dcases}
\end{equation}
is a geodesic path from $x$ to $y$ with respect to the Thompson metric. If we take $K$ to be the cone of positive semidefinite matrices with interior $\operatorname{int} K= \mathbb{S}^n_{++}$, then for a pair of points $X,Y\in\mathbb{S}^n_{++}$, we have $\beta=M(Y/X;K)=\lambda_{\max}(YX^{-1})$ and $\alpha=1/M(X/Y;K)=\lambda_{\min}(YX^{-1})$. Therefore, $X*_t Y$ reduces to a linear combination of $X$ and $Y$ with coefficients that are nonlinear functions of the extreme generalized eigenvalues of $(X,Y)$ and $t$. 

\begin{proposition}\label{affine_invariant_prop}
If $A\in \mathrm{GL}(n)$ and $X,Y\in \mathbb{S}^n_{++}$, then $(AXA^T)*_t(AYA^T)=A(X*_tY)A^T$ for any $t\in \mathbb{R}$.
\end{proposition}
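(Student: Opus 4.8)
The plan is to verify the identity directly from the formula \cref{Nussbaum geodesic}, by tracking how the parameters $\alpha$ and $\beta$ transform under the congruence $X \mapsto AXA^T$, $Y \mapsto AYA^T$. The key observation is that, as noted after \cref{Nussbaum geodesic}, for $X,Y \in \mathbb{S}^n_{++}$ the two scalars reduce to $\beta = \lambda_{\max}(YX^{-1})$ and $\alpha = \lambda_{\min}(YX^{-1})$, so everything hinges on the invariance of the generalized eigenvalues of the pair $(X,Y)$ under simultaneous congruence.

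First I would show that $\lambda_i\bigl((AYA^T)(AXA^T)^{-1}\bigr) = \lambda_i(YX^{-1})$ for all $i$. Indeed, $(AXA^T)^{-1} = A^{-T}X^{-1}A^{-1}$, so $(AYA^T)(AXA^T)^{-1} = AYA^T A^{-T} X^{-1} A^{-1} = A(YX^{-1})A^{-1}$, which is similar to $YX^{-1}$ and hence has the same spectrum. (Alternatively, one can argue via $\lambda_i(YX^{-1}) = \lambda_i(X^{-1/2}YX^{-1/2})$ and the congruence-invariance already recorded for $d_2$ in \cref{sec:metrics}.) In particular $\lambda_{\max}$ and $\lambda_{\min}$ are preserved, so writing $\tilde\alpha, \tilde\beta$ for the parameters associated with the transformed pair $(AXA^T, AYA^T)$, we get $\tilde\alpha = \alpha$ and $\tilde\beta = \beta$; in particular the case split $\alpha \neq \beta$ versus $\alpha = \beta$ is unaffected by the congruence.

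Next I would simply substitute into \cref{Nussbaum geodesic}. In the generic case $\alpha \neq \beta$,
\begin{equation*}
(AXA^T)*_t(AYA^T) = \left(\frac{\beta^t-\alpha^t}{\beta-\alpha}\right)AYA^T + \left(\frac{\beta\alpha^t-\alpha\beta^t}{\beta-\alpha}\right)AXA^T = A\left[\left(\frac{\beta^t-\alpha^t}{\beta-\alpha}\right)Y + \left(\frac{\beta\alpha^t-\alpha\beta^t}{\beta-\alpha}\right)X\right]A^T,
\end{equation*}
where the last equality is just bilinearity of $Z \mapsto AZA^T$; the bracketed expression is exactly $X*_tY$. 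The degenerate case $\alpha = \beta$ is even easier: $(AXA^T)*_t(AYA^T) = \alpha^t AXA^T = A(\alpha^t X)A^T = A(X*_tY)A^T$. This covers all $t \in \mathbb{R}$, since the formula \cref{Nussbaum geodesic} makes sense and defines $*_t$ for any real $t$.

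I do not anticipate a genuine obstacle here; the only point requiring a moment's care is making explicit that $\alpha$ and $\beta$ — a priori defined through the cone-theoretic quantities $M(\cdot/\cdot\,;K)$ — are congruence-invariant, which follows from the similarity $A(YX^{-1})A^{-1}$ computation above together with the fact that congruence by $A \in \mathrm{GL}(n)$ maps the positive semidefinite cone onto itself (so that $M(\cdot/\cdot\,;K)$ transforms correctly). Once that is in hand the result is immediate from linearity.
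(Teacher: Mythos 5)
Your proof is correct and takes essentially the same approach as the paper: observe that $(AYA^T)(AXA^T)^{-1} = A(YX^{-1})A^{-1}$ is similar to $YX^{-1}$ (hence $\alpha,\beta$ are unchanged), then conclude by the linearity of $Z\mapsto AZA^T$ applied to the Nussbaum formula. Your write-up is simply a fuller version of what the paper compresses into the phrase ``elementary algebra,'' and the added care about the $\alpha=\beta$ case and the cone-theoretic origin of $\alpha,\beta$ is sound but not a departure from the paper's argument.
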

\begin{proof}
The proof follows by noting that $(AYA^T)(AXA^T)^{-1}=AYX^{-1}A^{-1}$ and $YX^{-1}$ have the same eigenvalues and using elementary algebra.
\end{proof}

\begin{proposition}
If $X,Y\in \mathbb{S}^2_{++}$, then $X\#_t Y = X*_t Y$ for all $t\in [0,1]$.
\end{proposition}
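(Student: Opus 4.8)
The plan is to reduce to a diagonal configuration by affine invariance and then verify the identity by a short direct computation. Both geodesic constructions are congruence-invariant: for $X*_tY$ this is exactly \cref{affine_invariant_prop}, while for $X\#_tY$ it is the standard affine invariance of the affine-invariant Riemannian metric (since $P\mapsto APA^T$ is an isometry of $d_2$ and the Riemannian geodesic joining a given pair of points is unique, isometries carry $t\mapsto X\#_tY$ to $t\mapsto (AXA^T)\#_t(AYA^T)$). So, given $X,Y\in\mathbb{S}^2_{++}$, I would choose $A=OX^{-1/2}$ with $O$ orthogonal diagonalizing $X^{-1/2}YX^{-1/2}$; then $AXA^T=I$ and $AYA^T=D:=\diag(\lambda_1,\lambda_2)$, where $\lambda_1\geq\lambda_2>0$ are the eigenvalues of $YX^{-1}$. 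By the two invariance properties it suffices to prove $I\#_tD=I*_tD$.

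In the diagonal case I would compute each side explicitly. The left-hand side is just $I\#_tD=D^t=\diag(\lambda_1^t,\lambda_2^t)$. For the right-hand side, taking $x=I$ and $y=D$ in \cref{Nussbaum geodesic}, one reads off $\beta=\lambda_{\max}(D)=\lambda_1$ and $\alpha=\lambda_{\min}(D)=\lambda_2$, so $I*_tD$ is the diagonal matrix whose $i$-th entry is $\frac{\beta^t-\alpha^t}{\beta-\alpha}\lambda_i+\frac{\beta\alpha^t-\alpha\beta^t}{\beta-\alpha}$ (generic case $\lambda_1\neq\lambda_2$). Substituting $\lambda_1=\beta$ and $\lambda_2=\alpha$, the first entry telescopes to $\beta^t$ and the second to $\alpha^t$, whence $I*_tD=\diag(\lambda_1^t,\lambda_2^t)=I\#_tD$. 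The degenerate case $\alpha=\beta$ is immediate: then $D=\lambda I$ and both sides equal $\lambda^t I$.

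I do not anticipate a genuine obstacle — the argument is essentially bookkeeping, and the only step requiring care is the reduction itself. It is, however, worth a remark on why the statement is special to $n=2$. For any dimension, $X*_tY$ is an affine combination $c_1(t)Y+c_2(t)X$ of the endpoints, so after congruence-diagonalization its eigenvalues lie on a single straight line as a function of the generalized eigenvalues $\lambda_i$, whereas $X\#_tY$ has eigenvalues $\lambda_i^t$ lying on the strictly concave curve $\lambda\mapsto\lambda^t$ (for $0<t<1$). A line meets that curve in at most two points; in dimension two there are only two eigenvalues, and they are precisely the abscissae $\alpha,\beta$ at which the interpolating line is anchored to the curve, so the agreement is automatic. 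This also makes clear that the identity fails for $n\geq 3$ in general.
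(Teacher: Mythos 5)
Your argument is correct, but it takes a genuinely different route from the paper's. The paper first reduces to the single time $t=1/2$ by a density-of-dyadic-rationals argument, then reduces to $X=I$ by affine invariance, and finally verifies the midpoint identity
\[
\Sigma^{1/2}=\frac{1}{\sqrt{\lambda_{\max}}+\sqrt{\lambda_{\min}}}\bigl(\Sigma+\sqrt{\lambda_{\max}\lambda_{\min}}\,I\bigr)
\]
via the Cayley--Hamilton theorem. You instead push the congruence reduction all the way to the diagonal pair $(I,D)$ and then verify the identity for \emph{every} $t$ by a two-line substitution, handling the degenerate case $\alpha=\beta$ separately. Your route is arguably the cleaner of the two: it avoids the midpoint-iteration step entirely, which in the paper's proof implicitly relies on geodesic consistency for $*_t$ (the fact that iterated midpoints of $*$ reproduce the $*$-geodesic at dyadic times) --- a fact the paper only establishes later, in \cref{geodesic_consistency_lemma}. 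Your direct computation needs no such input, only \cref{affine_invariant_prop} and the standard affine invariance of the Riemannian geodesic. The closing remark --- that after diagonalization the $*_t$-eigenvalues are an affine function of $\lambda_i$ while the $\#_t$-eigenvalues lie on the strictly concave curve $\lambda\mapsto\lambda^t$, so agreement can occur at most at two abscissae --- is a nice conceptual gloss that also explains why the proposition is specific to $n=2$ and is consistent with the paper's later citation of the characterization from \cite{Lim2013}.
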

\begin{proof}
By the density of dyadic rationals in the real line, it is sufficient to prove that $X\#_{1/2} Y = X*_{1/2} Y$ for arbitrary $X$ and $Y$. Moreover, by affine-invariance and the uniqueness of the Riemannian geodesic, it is sufficient to prove that $I\#_{1/2} \Sigma = I*_{1/2}\Sigma$ for arbitrary $\Sigma \in \mathbb{S}^2_{++}$. This is equivalent to 
\begin{align*}
    \Sigma^{1/2}=\frac{1}{\sqrt{\lambda_{\max}}+\sqrt{\lambda_{\min}}}\left(\Sigma+\sqrt{\lambda_{\max}\lambda_{\min}}\,I\right),
\end{align*}
where $\lambda_i$ denote the eigenvalues of $\Sigma$. However, this equality is seen to hold since $\Sigma^{1/2}$ is a $2\times 2$ matrix with spectrum $\{\sqrt{\lambda_{\min}},\sqrt{\lambda_{\max}}\}$ and characteristic equation $p(\lambda)=\lambda^2-(\sqrt{\lambda_{\max}}+\sqrt{\lambda_{\min}})\lambda + \sqrt{\lambda_{\max}\lambda_{\min}} = 0$, which is of course satisfied by $\Sigma^{1/2}$ by the Cayley-Hamilton theorem.
\end{proof}

In general, of course, the geodesics $X\#_t Y$ and $X*_t Y$ do not agree in higher dimensions. Indeed, the two choices of geodesic agree in $\mathbb{S}^n_{++}$ if and only if the spectrum of $YX^{-1}$ consists of at most two distinct eigenvalues \cite{Lim2013}. It should be noted that even in $\mathbb{S}^2_{++}$ where the $\#_t$ and $*_t$ geodesics agree, the Thompson geodesic is still not unique. Indeed, it is shown in \cite{Lim2013} that there exists a unique Thompson geodesic from $X$ to $Y$ in $\mathbb{S}^n_{++}$ if and only if the spectrum of $YX^{-1}$ is contained in $\{\lambda,\lambda^{-1}\}$ for some fixed $\lambda>0$.
For example, the following construction describes another geodesic $X\diamond_t Y$ of $(\mathbb{S}^n_{++},d_T)$ from $X$ to $Y$ when $\lambda_{\max}(YX^{-1})\neq \lambda_{\min}(YX^{-1})$:
\begin{equation}
    X\diamond_t Y = \begin{dcases}
        \frac{\lambda_{\max}^t-\lambda_{\max}^{-t}}{\lambda_{\max}-\lambda_{\max}^{-1}}\,Y + \frac{\lambda_{\max}^{1-t}-\lambda_{\max}^{t-1}}{\lambda_{\max}-\lambda_{\max}^{-1}}\,X, \quad \lambda_{\max}\lambda_{\min}\geq 1 \\
        \frac{\lambda_{\min}^t-\lambda_{\min}^{-t}}{\lambda_{\min}-\lambda_{\min}^{-1}}\,Y + \frac{\lambda_{\min}^{1-t}-\lambda_{\min}^{t-1}}{\lambda_{\min}-\lambda_{\min}^{-1}}\,X, \quad \lambda_{\max}\lambda_{\min}\leq 1,
    \end{dcases}
\end{equation}
where $\lambda_{\max}$ and $\lambda_{\min}$ refer to the corresponding eigenvalues of $YX^{-1}$ \cite{Lim2013,Nussbaum1994}. A depiction of these various geodesics for an example computed in the set $\mathbb{S}^2_{++}$ visualized as the interior of a cone in $\mathbb{R}^3$ is shown in \cref{fig:2x2 geodesics}. We thus note that $*_t$ is special among the Thompson geodesics constructed by Nussbaum \cite{Nussbaum1994} in that it coincides with the Riemannian geodesic for $2\times 2$ SPD matrices. The $*_t$ geodesic satisfies other desirable properties that do not generally hold for other Thompson geodesics such as joint homogeneity, which is also satisfied by the Riemannian geodesic in all dimensions.

\begin{proposition}[Joint homogeneity]
    Let $X_1, X_2\in \mathbb{S}^n_{++}$. If $\mu_1$ and $\mu_2$ are positive scalars, then
    \begin{equation}
        (\mu_1X_1)*_t(\mu_2X_2)=\mu_1^{1-t}\mu_2^t(X_1*_tX_2)
    \end{equation}
    for any $t\in \mathbb{R}$.
\end{proposition}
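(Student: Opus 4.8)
The plan is to unwind the definition~\cref{Nussbaum geodesic} of $*_t$ and to track how the two scalars $\alpha=\lambda_{\min}(YX^{-1})$ and $\beta=\lambda_{\max}(YX^{-1})$ transform under the simultaneous rescalings $X\mapsto\mu_1X_1$, $Y\mapsto\mu_2X_2$. Since $(\mu_2X_2)(\mu_1X_1)^{-1}=(\mu_2/\mu_1)\,X_2X_1^{-1}$, every generalized eigenvalue of the pair is multiplied by the positive factor $r:=\mu_2/\mu_1$; writing $\alpha_0:=\lambda_{\min}(X_2X_1^{-1})$ and $\beta_0:=\lambda_{\max}(X_2X_1^{-1})$ for the eigenvalues attached to the unscaled pair, we obtain $\alpha=r\alpha_0$ and $\beta=r\beta_0$. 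In particular $\alpha=\beta$ if and only if $\alpha_0=\beta_0$, so the branch of~\cref{Nussbaum geodesic} selected for the pair $(\mu_1X_1,\mu_2X_2)$ is the same one selected for $(X_1,X_2)$.

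First I would treat the generic branch $\alpha_0\neq\beta_0$. Substituting $\alpha=r\alpha_0$, $\beta=r\beta_0$, $X=\mu_1X_1$, $Y=\mu_2X_2$ into~\cref{Nussbaum geodesic} and pulling the powers of $r$ out of each ratio, the coefficient of $X_2$ becomes $r^{t-1}\mu_2\cdot\frac{\beta_0^t-\alpha_0^t}{\beta_0-\alpha_0}$ and the coefficient of $X_1$ becomes $r^{t}\mu_1\cdot\frac{\beta_0\alpha_0^t-\alpha_0\beta_0^t}{\beta_0-\alpha_0}$. The remaining point is pure exponent bookkeeping: $r^{t-1}\mu_2=(\mu_2/\mu_1)^{t-1}\mu_2=\mu_1^{1-t}\mu_2^{t}$ and likewise $r^{t}\mu_1=(\mu_2/\mu_1)^{t}\mu_1=\mu_1^{1-t}\mu_2^{t}$, so the common factor $\mu_1^{1-t}\mu_2^{t}$ comes out of both terms and what is left is precisely $X_1*_tX_2$ by definition~\cref{Nussbaum geodesic}. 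The degenerate branch $\alpha_0=\beta_0$ is immediate, since $(\mu_1X_1)*_t(\mu_2X_2)=(r\alpha_0)^t\mu_1X_1=r^t\mu_1\,\alpha_0^tX_1=\mu_1^{1-t}\mu_2^t\,(X_1*_tX_2)$.

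There is essentially no obstacle here: the identity drops out by direct substitution. The only thing to be careful about is that the case distinction in~\cref{Nussbaum geodesic} is respected for the rescaled pair, which is exactly the observation that multiplying all generalized eigenvalues by the positive number $r$ preserves the relation $\alpha=\beta$. Note also that the argument uses only the algebraic form of~\cref{Nussbaum geodesic}, which is meaningful for every real $t$, so the identity holds for all $t\in\mathbb{R}$ as claimed, not merely for $t\in[0,1]$.
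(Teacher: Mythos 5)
Your proof is correct and follows essentially the same approach as the paper's: observe that $\lambda_i\bigl(\mu_2X_2(\mu_1X_1)^{-1}\bigr)=\tfrac{\mu_2}{\mu_1}\lambda_i(X_2X_1^{-1})$ and substitute into~\cref{Nussbaum geodesic}, tracking the powers of $r=\mu_2/\mu_1$. You simply carry out explicitly the algebra the paper leaves to the reader, including the branch check $\alpha=\beta\iff\alpha_0=\beta_0$ and the observation that the argument is valid for all real $t$.
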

\begin{proof}
    The result follows from the equality $\lambda_i\left((\mu_2 X_2(\mu_1 X_1)^{-1}\right)=\frac{\mu_2}{\mu_1}\lambda_i(X_2X_1^{-1})$ and substitution into the expression for $(\mu_1X_1)*_t(\mu_2X_2)$ arising from \cref{Nussbaum geodesic}. 
\end{proof}

\begin{corollary}
    If $X_1, X_2\in \mathbb{S}^n_{++}$, then $(\mu_1X_1)*_{\frac{1}{2}}(\mu_2X_2)=\sqrt{\mu_1\mu_2}\,(X_1*_{\frac{1}{2}}X_2)$ for any positive scalars $\mu_1$ and $\mu_2$.
\end{corollary}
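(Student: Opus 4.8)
The plan is to simply specialize the Joint Homogeneity proposition to the case $t = \tfrac{1}{2}$. First I would invoke that proposition with $X_1, X_2 \in \mathbb{S}^n_{++}$ and the given positive scalars $\mu_1, \mu_2$, which gives
\begin{equation*}
    (\mu_1 X_1) *_t (\mu_2 X_2) = \mu_1^{1-t}\mu_2^t \, (X_1 *_t X_2)
\end{equation*}
for every $t \in \mathbb{R}$. Then I would substitute $t = \tfrac{1}{2}$ and observe that the prefactor collapses: $\mu_1^{1-1/2}\mu_2^{1/2} = \mu_1^{1/2}\mu_2^{1/2} = \sqrt{\mu_1\mu_2}$. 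This yields the claimed identity directly, with no further work.

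There is no real obstacle here; the corollary is a one-line consequence of the preceding proposition, and the only thing to check is the elementary exponent arithmetic $1 - \tfrac12 = \tfrac12$. If one wished to be self-contained rather than cite Joint Homogeneity, one could instead argue from \cref{Nussbaum geodesic} directly, using that $\lambda_i\bigl(\mu_2 X_2 (\mu_1 X_1)^{-1}\bigr) = \tfrac{\mu_2}{\mu_1}\lambda_i(X_2 X_1^{-1})$ so that $\alpha, \beta$ for the scaled pair are $\tfrac{\mu_2}{\mu_1}$ times those for $(X_1, X_2)$, and then verifying that the $t = \tfrac12$ coefficients in \cref{Nussbaum geodesic} rescale by exactly $\sqrt{\mu_1\mu_2}$ after absorbing the extra $\mu_1, \mu_2$ multiplying $X_1, X_2$; but citing the proposition is cleaner and is the route I would take.
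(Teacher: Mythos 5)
Your proof is correct and matches the paper's intent: the corollary is stated immediately after the Joint Homogeneity proposition with no separate proof, precisely because it is the $t=\tfrac{1}{2}$ specialization you describe. The exponent arithmetic $\mu_1^{1/2}\mu_2^{1/2}=\sqrt{\mu_1\mu_2}$ is all that is needed.
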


We will view the $*_t$ Thompson geodesic \cref{Nussbaum geodesic} as a distinguished geodesic of $(K,d_T)$, which makes the resulting structure a geodesic space. For the remainder of this paper, by ``Thompson geodesic'' we refer specifically to the $*_t$ geodesic unless stated otherwise.

\begin{figure} 
    \centering
    \includegraphics[width=0.5\linewidth]{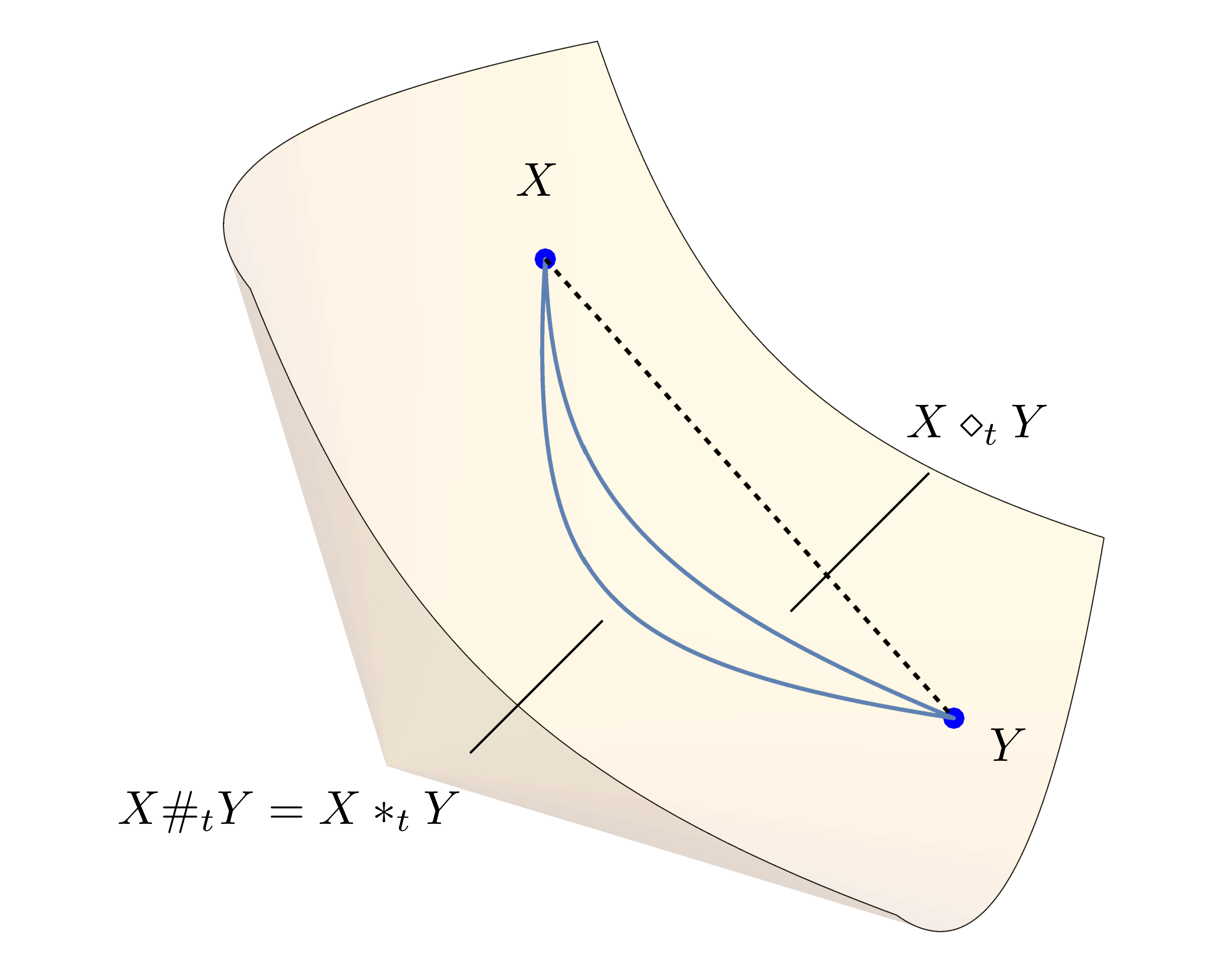}
    \caption{Two geodesics in $(\mathbb{S}^2_{++},d_T)$ between a pair of matrices visualized as points in the interior of the closed convex cone $\{(a,b,c)\in\mathbb{R}^3: a>0, \, ac-b^2>0\}$. The dashed straight line between the endpoints does not represent a geodesic.}
    \label{fig:2x2 geodesics}
\end{figure}

\subsection{Metric inequalities in Hilbert and Thompson geometries}
The following theorem from \cite{Nussbaum2004} establishes two important inequalities in the Thompson and Hilbert geometries of convex cones that provide insight into the curvature properties of these geometries. These inequalities can be viewed as describing how far the Thompson and Hilbert geometries are from being non-positively curved.

\begin{theorem}[Theorems 1.1 and 1.2 of \cite{Nussbaum2004}] \label{Nussbaum inequality}
Let $K$ be an almost Archimedean cone and $u,x,y\in K$ be in the same part of $K$. Suppose that $0 < s < 1$ and $R>0$, and that $d_H(u,x)\leq R$ and $d_H(u,y)\leq R$. If the linear span of $\{u,x,y\}$ is 1- or 2-dimensional, then $d_T(u*_s x, u*_s y)\leq s d_T(x,y)$ and $d_H(u*_s x, u*_s y)\leq s d_H(x,y)$. In general,
\begin{align}
    d_T(u*_s x, u*_s y) &\leq \left[\frac{2(1-e^{-Rs})}{1-e^{-R}}-s\right]d_T(x,y) \\
    d_H(u*_s x, u*_s y) &\leq \left(\frac{1-e^{-Rs}}{1-e^{-R}}\right)d_H(x,y).
\end{align}
\end{theorem}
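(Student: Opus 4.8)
# Proof Proposal

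The plan is to reduce everything to the low-dimensional case via a projection argument and then perform an explicit one-variable optimization. Since the statement is credited to Nussbaum's paper \cite{Nussbaum2004}, the proof should follow the structure there, but let me sketch how I would reconstruct it. First, observe that all the claimed inequalities are affine-invariant in an appropriate sense and involve only the three points $u, x, y$; moreover the Thompson and Hilbert distances between two points $p, q$ in the same part of $K$ depend only on the numbers $M(p/q)$ and $M(q/p)$, which in turn depend only on the relative position of $p$ and $q$ inside the two-dimensional subspace they span (together with the trace of $K$ on that subspace). This suggests that the essential geometry happens in a two-dimensional slice.

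The key reduction I would carry out: normalize by the gauge so that we may assume $u$ is an interior point and rescale $x, y$ so that $\max\{M(x/u), M(u/x)\}$ and $\max\{M(y/u),M(u/y)\}$ are controlled by $R$; then the points $u *_s x$ and $u *_s y$ lie, by the explicit formula \eqref{Nussbaum geodesic}, in the linear span of $\{u, x, y\}$. When $\dim \sp\{u,x,y\} \leq 2$, the cone $K$ restricted to that span is (by the almost-Archimedean hypothesis) a closed two-dimensional cone, hence linearly isomorphic to $\R^2_+$; in that model the Thompson and Hilbert metrics are the $\ell_\infty$ and Hilbert metrics on the positive quadrant, for which one computes directly using the logarithmic coordinates $(\log(p_1/p_2))$ that $*_s$ is just affine interpolation of the two log-coordinates with weights $1-s, s$. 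The contraction inequalities $d_T(u*_sx, u*_sy) \le s\, d_T(x,y)$ and the Hilbert analogue then fall out of convexity of $|\cdot|$ and of the max, exactly as stated in the "1- or 2-dimensional" clause.

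For the general case, the standard device is to reduce to the 2-dimensional case by a clever choice of comparison points, or — more likely in Nussbaum's treatment — to bound each term using the mixed-gauge estimates. Concretely, for the Hilbert inequality I would use the semigroup/monotonicity structure: write $u *_s x$ and $u*_s y$ using \eqref{Nussbaum geodesic} with $\alpha_x = 1/M(u/x)$, $\beta_x = M(x/u)$, etc., and estimate $M\big((u*_s y)/(u*_s x)\big)$ from above by splitting into the $y$-vs-$u$ part and the $u$-vs-$x$ part, invoking the bounds $\alpha_x, \alpha_y, 1/\beta_x, 1/\beta_y \ge e^{-R}$ that come from $d_H(u,x), d_H(u,y) \le R$ (after normalizing $M(u/x)M(x/u)$-type products appropriately). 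The scalar function $t \mapsto (\beta^t - \alpha^t)/(\beta - \alpha)$ is log-concave in a way that produces exactly the factor $(1-e^{-Rs})/(1-e^{-R})$ once one plugs in the worst-case ratio $\beta/\alpha = e^{2R}$ or the worst-case endpoint; the Thompson bound then follows from the Hilbert bound together with the elementary relation $d_T \le d_H + |{\log}(\text{normalization})|$ and careful bookkeeping of the normalizing scalars, which is where the asymmetric-looking coefficient $2(1-e^{-Rs})/(1-e^{-R}) - s$ comes from.

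The main obstacle I anticipate is the general-dimensional case: the $*_s$ curve is \emph{not} a metric projection or a nonexpansive-map image in any obvious way once $\sp\{u,x,y\}$ has dimension $3$, so one cannot simply restrict to a subcone. The real work is the explicit scalar estimate controlling $M\big((u*_s y)/(u*_s x)\big)$ in terms of $M(x/y), M(y/x)$ and the radius bound $R$ — finding the right convexity inequality for the two-parameter family of coefficients appearing in \eqref{Nussbaum geodesic} and verifying that the extremal configuration is the degenerate one where the relevant eigenvalues sit at $e^{\pm R}$. Everything else (the affine-invariance normalization, the $2$-dimensional model computation, the passage $d_H \to d_T$) is routine once that inequality is in hand.
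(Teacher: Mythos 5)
The theorem you are reconstructing is \emph{not} proved anywhere in this paper: it is quoted verbatim as Theorems 1.1 and 1.2 of Nussbaum's 2004 paper and used as a black box, so there is no in-paper argument against which to measure your sketch. Taking your sketch on its own terms: the two-dimensional half is essentially right. Restricting $K$ to $\sp\{u,x,y\}$ when that span is $\le 2$-dimensional gives a planar cone which (by the almost-Archimedean hypothesis) is linearly isomorphic to $\R^2_+$, and one checks directly from \cref{Nussbaum geodesic} that in $\R^2_+$ the $*_s$ geodesic is the coordinatewise geometric mean $(u_i^{1-s}x_i^s)$. In logarithmic coordinates $d_T$ and $d_H$ become the $\ell_\infty$-norm and the oscillation seminorm of the difference, the map $x\mapsto u*_s x$ is exactly $s$-Lipschitz in those coordinates, and the first clause (in fact with equality) follows.

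The real content of the theorem, however, is the general-dimensional case, and there your proposal does not deliver a proof --- you say so yourself when you write that ``finding the right convexity inequality for the two-parameter family of coefficients\dots\ is the real work.'' That scalar estimate, which controls $M\bigl((u*_s y)/(u*_s x)\bigr)$ in terms of $M(y/x)$, $M(x/y)$ and the radius $R$, \emph{is} the theorem; nothing in the proposal actually establishes it. A few concrete warnings. First, the suggestion that one might ``reduce to the 2-dimensional case by a clever choice of comparison points'' cannot work, because that would yield the factor $s$ rather than the strictly larger factor $(1-e^{-Rs})/(1-e^{-R})$; the general bound is \emph{provably weaker} than the planar one, so no planar reduction can give it. Second, the passage you propose from the Hilbert bound to the Thompson bound via ``$d_T \le d_H + |\log(\text{normalization})|$'' is not an established relation, and in any case would not by itself produce the specific coefficient $\tfrac{2(1-e^{-Rs})}{1-e^{-R}}-s$; Nussbaum obtains the two bounds by separate (though parallel) estimates of the coefficient functions $\varphi_{\alpha\beta}(s)$ and $\psi_{\alpha\beta}(s)$, not by converting one into the other. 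Third, the cited worst case is off: $d_H(u,x)\le R$ gives $\beta_x/\alpha_x = M(x/u)M(u/x) = e^{d_H(u,x)}\le e^R$, not $e^{2R}$. To turn this sketch into a proof you would have to actually carry out the gauge comparison $u*_s y \le \lambda\,(u*_s x)$ starting from $y\le M(y/x)\,x$, bound the resulting ratios of $\varphi$'s and $\psi$'s uniformly over the region permitted by $R$, and verify that the supremum lands exactly on the displayed constants --- none of which is done here.
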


A remarkable feature of \cref{Nussbaum inequality} is that it ties the Hilbert and Thompson geometries of a convex cone together and suggests that one should consider both of these metrics in geometric analysis in convex cones rather than making a choice of one over the other. A consequence of \cref{Nussbaum inequality} is that both the Hilbert and Thompson geometries are semihyperbolic in the sense of Alonso and Bridson \cite{Alonso1995}. 

\begin{corollary}
$\mathbb{S}^n_{++}$ is semihyperbolic when endowed with Hilbert's projective metric or Thompson's part metric.
\end{corollary}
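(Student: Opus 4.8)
The plan is to exhibit, for both metrics, a bounded geodesic bicombing satisfying the fellow-traveler property of Alonso and Bridson \cite{Alonso1995}, and to verify the fellow-traveler inequality using \cref{Nussbaum inequality}. For an ordered pair $(X,Y)\in\mathbb{S}^n_{++}\times\mathbb{S}^n_{++}$, take the combing path $\sigma_{X,Y}\colon[0,1]\to\mathbb{S}^n_{++}$, $\sigma_{X,Y}(t)=X*_tY$. Boundedness is immediate: parametrized by arc length, $\sigma_{X,Y}$ is a genuine geodesic path by \cref{Nussbaum geodesic}, hence a $(1,0)$-quasigeodesic with constants independent of $X,Y$; the fellow-traveler estimate below is then checked in the $[0,1]$-parametrization. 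So the entire content lies in that estimate, together with the standing fact that $\mathbb{S}^n_{++}$ is a single part of the positive semidefinite cone, which is almost Archimedean, so that \cref{Nussbaum inequality} always applies with a \emph{finite} radius $R$.

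For the fellow-traveler property I would move one endpoint at a time. By the triangle inequality,
\[
 d_T\!\bigl(X *_t Y,\, X' *_t Y'\bigr) \;\le\; d_T\!\bigl(X *_t Y,\, X *_t Y'\bigr) + d_T\!\bigl(X *_t Y',\, X' *_t Y'\bigr).
\]
The first term has exactly the form handled by \cref{Nussbaum inequality} with $u=X$ and moving points $Y,Y'$: choosing $R$ to be any finite upper bound for $d_H(X,Y)$ and $d_H(X,Y')$ gives $d_T(X *_t Y, X *_t Y')\le\bigl[\tfrac{2(1-e^{-Rt})}{1-e^{-R}}-t\bigr]d_T(Y,Y')$. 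For the second term I would first use reversibility of the distinguished geodesic, $X *_t Y' = Y' *_{1-t} X$ (clear from \cref{Nussbaum geodesic} after the substitution $\alpha\leftrightarrow 1/\beta$, or simply because geodesic paths reverse), and then apply \cref{Nussbaum inequality} again with $u=Y'$, parameter $1-t$, and moving points $X,X'$.

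The one point that needs care is that the multiplicative constants obtained this way are bounded \emph{uniformly in both $t\in[0,1]$ and $R>0$}, so that the unbounded diameter of $\mathbb{S}^n_{++}$ does not spoil the uniformity demanded of a bicombing. This holds because $t\le 1$ forces $e^{-Rt}\ge e^{-R}$, hence $\tfrac{1-e^{-Rt}}{1-e^{-R}}\le 1$ and $\tfrac{2(1-e^{-Rt})}{1-e^{-R}}-t\le 2-t\le 2$. Combining the two estimates yields $d_T(\sigma_{X,Y}(t),\sigma_{X',Y'}(t))\le 2\,d_T(X,X')+2\,d_T(Y,Y')\le 4\max\{d_T(X,X'),d_T(Y,Y')\}$, i.e.\ the $4$-fellow-traveler property with zero additive constant, establishing semihyperbolicity of $(\mathbb{S}^n_{++},d_T)$. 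The Hilbert case runs identically and is cleaner still: the Hilbert inequality of \cref{Nussbaum inequality} carries coefficient $\tfrac{1-e^{-Rt}}{1-e^{-R}}\le 1$, so the same two-step comparison yields the $2$-fellow-traveler property for Hilbert's metric (viewed on the space of rays, where $d_H$ is a genuine metric). I expect the main obstacle to be bookkeeping rather than mathematics: pinning down the precise Alonso-Bridson definition being verified, checking the reversibility identity for $*_t$, and confirming the finiteness of $R$ — all routine once \cref{Nussbaum inequality} is in hand.
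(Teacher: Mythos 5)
The paper states this corollary without proof, treating it as an immediate observation following \cref{Nussbaum inequality}; your argument fills in exactly the reasoning the authors must have had in mind, and it is correct. The two key steps — observing that the Thompson coefficient $\tfrac{2(1-e^{-Rs})}{1-e^{-R}}-s$ is bounded by $2$ uniformly in $R>0$ and $s\in(0,1]$ (because $e^{-Rs}\ge e^{-R}$ forces the fraction below $1$), and moving one endpoint at a time via the reversal identity $X*_tY' = Y'*_{1-t}X$ of \cref{geodesic_consistency_lemma} — are precisely what make the Nussbaum inequality deliver a fellow-traveler estimate with constants independent of the (unbounded) diameter $R$. The only step you wave at rather than write out is passing from the affine $[0,1]$-parametrization to the arc-length parametrization actually used in the Alonso--Bridson definition: since $\sigma_{X,Y}$ is a genuine geodesic, the two parametrizations differ along $\sigma_{X',Y'}$ by at most $|d(X,Y)-d(X',Y')| \le d(X,X')+d(Y,Y')$, so the multiplicative constant increases by at most $2$; you flag this as routine, which it is, but it is the one place a careful referee would want a sentence. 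With that caveat, the proposal is sound and is the intended proof.
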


\subsection{Sparsity preservation}

Sparse matrices are matrices whose non-zero elements form a relatively small proportion of the  matrix entries. They appear in many areas of applied mathematics and engineering including the numerical analysis of partial differential equations, network theory, and machine learning. They arise naturally in multi-agent systems that include relatively few pairwise interactions. From a computational perspective, sparsity is an important property due to the existence of specialized algorithms and data structures that enable the efficient storage and manipulation of large sparse matrices \cite{Gilbert1992}. 

An interesting property of the $*_t$ Thompson geodesic is that it preserves sparsity. That is, if $X$ and $Y$ are sparse SPD matrices, then $X*_t Y$ is sparse for every $t\in \mathbb{R}$. This is simply a consequence of $X*_t Y$ being a linear combination of $X$ and $Y$ for any fixed $t$. In contrast, the Riemannian geodesic $X\#_t Y$, whose construction involves computing matrix square roots, matrix products, and matrix inverses, does not preserve sparsity. Thus, the use of Riemannian interpolation to process large sparse SPD matrices may be problematic. For instance, kernel matrices in machine learning are often built as sparse matrices to facilitate the analysis of large datasets. Applying the standard affine-invariant Riemannian geometry to process such SPD matrices will typically corrupt the sparse structure, potentially resulting in intractable computations. See \cref{fig:sparsity} for a visualization of Riemannian and $*_t$ Thompson geodesic interpolations of a pair of $20\times 20$ SPD matrices with 68 non-zero entries.

\begin{figure} 
    \centering
    \includegraphics[width=1.0\linewidth]{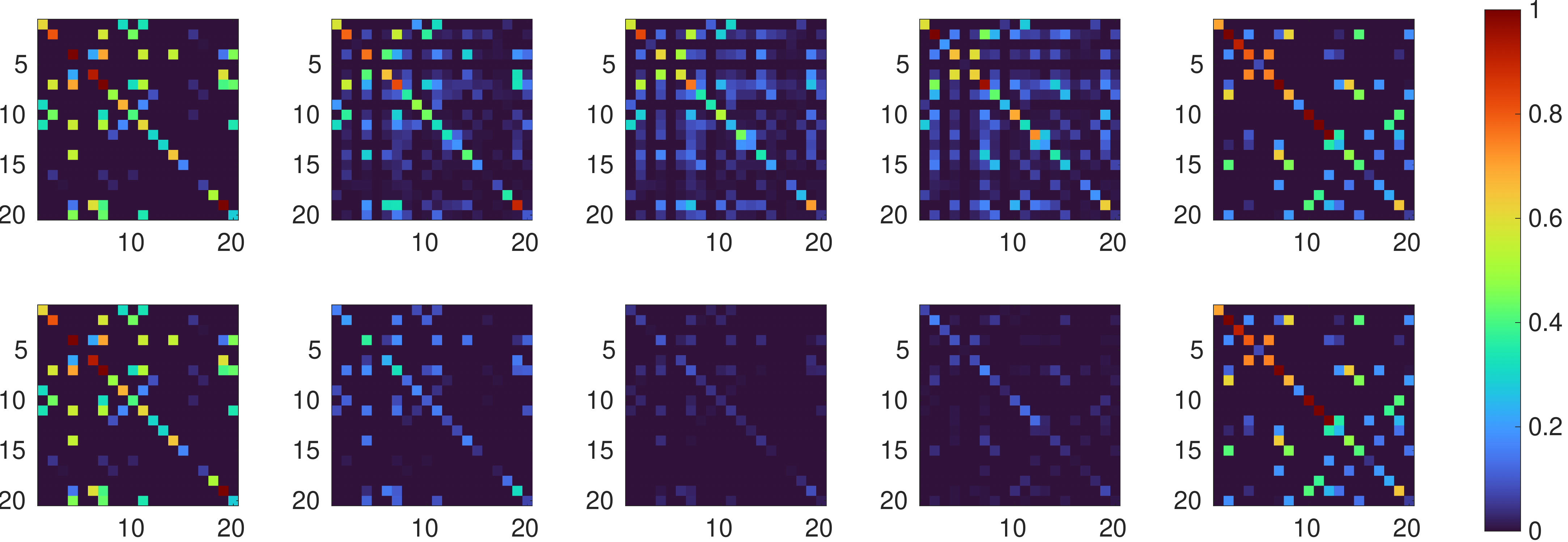}
    \caption{Points along the Riemannian (top row) and Thompson (bottom row) geodesic interpolations of a pair of $20\times 20$ SPD matrices with 68 non-zero entries. The matrices represent equidistant points along the geodesics as measured by the corresponding metric. Each pixel is colored according to the value of the corresponding matrix element. We observe that in the Riemannian case, most of the matrix elements along the interpolation are non-zero.}
    \label{fig:sparsity}
\end{figure}

\section{Inductive mean of SPD matrices based on Thompson geometry}
\label{sec:inductive mean}

A crucial step in developing a scalable computational framework for performing analysis and statistics on SPD-valued data using extreme generalized eigenvalues is to provide a suitable definition for the mean of a collection of $k$ SPD matrices whose computation can be based primarily on finding a sequence of extreme generalized eigenvalues. In this section, we introduce such a notion for any finite collection of SPD matrices through an iterative algorithm based on Thompson geodesics and prove that it yields a well-defined and unique point in each case. Furthermore, we highlight and prove a number of desirable properties that are satisfied by this novel inductive mean in \cref{properties_section}.

Specifically, given any finite ordered set $\mathcal{P}=(Y_1,\cdots,Y_k)\subset\mathbb{S}^n_{++}$, we generate a sequence of SPD matrices $(X_i)_{i\geq 1}$ from an arbitrary initialization $X_1\in \mathbb{S}^n_{++}$ according to \cref{alg:inductive mean}. We will then prove that any sequence generated by this algorithm converges to a point $X^*$ that is independent of the choice of initialization $X_1$ and the ordering of the $Y_j$, and thus can be viewed as a mean of the set of points $\{Y_j\}$.

\begin{algorithm}
\caption{Generate inductive sequence of SPD matrices $(X_i)_{i\geq 1}$ from an initial point $X_1$ and the finite ordered set $\mathcal{P}=(Y_1,\cdots,Y_k)\subset\mathbb{S}^n_{++}$}
\label{alg:inductive mean}
\begin{algorithmic}[1]
\FOR {$i\geq 1$} 
\STATE\label{line3}{Set $j \equiv i \mod k$ for $1\leq j \leq k$}
\STATE{Define $X_{i+1} = X_i*_{\frac{1}{i+1}}Y_j$}
\ENDFOR
\RETURN $(X_1,X_2,X_3,\cdots)$.
\end{algorithmic} 
\end{algorithm}

\subsection{Mathematical preliminaries}

We begin by presenting a number of technical lemmas that are used in the proof of our main theorem. First note that the Thompson geodesic \cref{Nussbaum geodesic} in $\mathbb{S}^n_{++}$ can be written as
\begin{equation}\label{geodesic_eq}
X*_tY = \varphi_{\alpha\beta}(t)Y+\psi_{\alpha\beta}(t)X,
\end{equation}
where
\begin{equation*}
\varphi_{\alpha\beta}(t) =
\begin{cases}
    \frac{\beta^t-\alpha^t}{\beta-\alpha} & \text{ if } \beta> \alpha\\
    t\alpha^{t-1}  & \text{ if } \beta = \alpha 
\end{cases} \quad \quad \quad
\psi_{\alpha\beta}(t) =
\begin{cases}
    \frac{\beta\alpha^t-\alpha\beta^t}{\beta-\alpha} & \text{ if } \beta> \alpha\\
    (1-t)\alpha^t  & \text{ if } \beta = \alpha
\end{cases}
\end{equation*}
\begin{equation}\label{derivatives}
\frac{d\varphi_{\alpha\beta}}{dt}(0) =
\begin{cases}
    \frac{\log\beta-\log\alpha}{\beta-\alpha} & \text{ if } \beta> \alpha\\
    \frac{1}{\alpha} & \text{ if } \beta = \alpha
\end{cases}  \quad \quad \quad
\frac{d\psi_{\alpha\beta}}{dt}(0) =
\begin{cases}
    \frac{\beta\log\alpha-\alpha\log\beta}{\beta-\alpha} & \text{ if } \beta> \alpha\\
    \log\alpha-1 & \text{ if } \beta = \alpha
\end{cases}.
\end{equation}
for $\alpha = \lambda_{\min}(YX^{-1})$ and $\beta = \lambda_{\max}(YX^{-1})$.

\begin{lemma}[Geodesic consistency]\label{geodesic_consistency_lemma}
    For $X,Y\in\mathbb{S}^n_{++}$ and $s,t\in[0,1]$
    \begin{equation}\label{geodesic_consistency_eq1}
    X*_tY = Y*_{1-t}X,
    \end{equation}
    \begin{equation}\label{geodesic_consistency_eq2}
    X*_s(X*_t Y) = X*_{st}Y
    \end{equation}
    and
    \begin{equation}\label{geodesic_consistency_eq3}
    (X*_sY)*_t Y = X*_{s+t-st}Y.
    \end{equation}
\end{lemma}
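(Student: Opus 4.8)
The plan is to verify each of the three identities \eqref{geodesic_consistency_eq1}--\eqref{geodesic_consistency_eq3} by direct computation using the explicit form \eqref{geodesic_eq}, exploiting the fact that all three constructions produce linear combinations of $X$ and $Y$ whose coefficients are elementary functions of the extreme generalized eigenvalues $\alpha = \lambda_{\min}(YX^{-1})$ and $\beta = \lambda_{\max}(YX^{-1})$ and the parameter. A useful preliminary reduction is affine-invariance (\cref{affine_invariant_prop}): conjugating by $X^{-1/2}$ we may assume $X = I$, so that $Y$ (or rather its relevant spectral data) is all that matters, and $\alpha,\beta$ are simply the smallest and largest eigenvalues of $Y$. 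I would treat the generic case $\beta > \alpha$ in detail and remark that the degenerate case $\beta = \alpha$ follows either by a continuity/limiting argument (the coefficient functions $\varphi_{\alpha\beta},\psi_{\alpha\beta}$ extend continuously as $\alpha \to \beta$) or by the same bookkeeping with the $t\alpha^{t-1}$, $(1-t)\alpha^t$ formulas.

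For \eqref{geodesic_consistency_eq1}: with $X = I$, the eigenvalues of $IY^{-1} = Y^{-1}$ are the reciprocals of those of $Y$, so the roles of $\alpha$ and $\beta$ swap: $\lambda_{\min}(XY^{-1}) = 1/\beta$ and $\lambda_{\max}(XY^{-1}) = 1/\alpha$. Plugging $(\alpha,\beta) \mapsto (1/\beta, 1/\alpha)$ and $t \mapsto 1-t$ into \eqref{geodesic_eq} and simplifying $(1/\alpha)^{1-t}$, $(1/\beta)^{1-t}$ should reproduce $\varphi_{\alpha\beta}(t)$ and $\psi_{\alpha\beta}(t)$ with the coefficients of $X$ and $Y$ correctly interchanged; this is a one-line algebraic check. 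For \eqref{geodesic_consistency_eq2}: the key observation is that $X*_tY$ lies on the ray-segment between $X$ and $Y$, and more precisely that the generalized eigenvalues of the pair $(X, X*_tY)$ are exactly $\{\alpha^t, \beta^t\}$ — indeed on the two-dimensional (in the relevant sense) picture, $X*_tY$ has "eigenvalues relative to $X$" equal to $\alpha^t$ and $\beta^t$ by construction of the Nussbaum geodesic, so applying $*_s$ raises these to the $s$-th power, giving $\{\alpha^{st}, \beta^{st}\}$, which matches the data for $X*_{st}Y$; one then checks the linear coefficients agree. For \eqref{geodesic_consistency_eq3}: by \eqref{geodesic_consistency_eq1} this is equivalent to $Y*_{1-t}(Y*_{1-s}X) = Y*_{(1-s)(1-t)}X$, which is an instance of \eqref{geodesic_consistency_eq2} with the parameter $(1-s)(1-t) = 1 - (s+t-st)$; so \eqref{geodesic_consistency_eq3} follows immediately from the first two identities and no new computation is needed.

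The main obstacle is the bookkeeping in \eqref{geodesic_consistency_eq2}: one must confirm carefully that the generalized eigenvalues of $(X, X*_tY)$ are precisely $\alpha^t$ and $\beta^t$ (and in particular that $X*_tY$ is again positive definite, which is clear since it is a geodesic). The cleanest way to see this is again the affine-invariant reduction to $X = I$ together with a further reduction to the $2\times 2$ case: since $Y*_tX$ as defined by \eqref{Nussbaum geodesic} acts only through $\alpha$ and $\beta$, it suffices to verify the eigenvalue claim on the span of a two-dimensional invariant subspace, or even just on scalars, where $x *_t y$ with $0 < \alpha = \beta^{-1}$-type normalization reduces to the familiar identity that the Thompson geodesic on $\mathbb{R}_{>0}$ is $t \mapsto x^{1-t}y^t$ and hence composes correctly. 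Once the eigenvalue identity is in hand, matching the scalar coefficients $\varphi,\psi$ in \eqref{geodesic_eq} is routine algebra and I would not belabor it.
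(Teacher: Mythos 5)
Your overall plan matches the paper's proof precisely: prove \eqref{geodesic_consistency_eq1} by the symmetry swapping $(\alpha,\beta,t)\leftrightarrow(\beta^{-1},\alpha^{-1},1-t)$, prove \eqref{geodesic_consistency_eq2} via the key computation that the extreme generalized eigenvalues of $(X*_tY)X^{-1}$ are $\alpha^t$ and $\beta^t$, and derive \eqref{geodesic_consistency_eq3} from the first two as $(X*_sY)*_tY = Y*_{1-t}(Y*_{1-s}X) = Y*_{(1-t)(1-s)}X = X*_{s+t-st}Y$ — that last chain is exactly the paper's argument.

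However, your justification of the eigenvalue claim in \eqref{geodesic_consistency_eq2} has a genuine gap. First, the parenthetical assertion that the generalized eigenvalues of $(X, X*_tY)$ ``are exactly $\{\alpha^t,\beta^t\}$'' is false for $n>2$: the full spectrum of $(X*_tY)X^{-1}=\varphi_{\alpha\beta}(t)\,YX^{-1}+\psi_{\alpha\beta}(t)I$ is $\{\varphi_{\alpha\beta}(t)\lambda_i+\psi_{\alpha\beta}(t)\}_i$, and for an interior eigenvalue $\lambda_i$ this is not generally $\lambda_i^t$; only the extremes land on $\alpha^t$ and $\beta^t$. Second, and more importantly, your proposed ``reduction to a two-dimensional invariant subspace, or even just to scalars'' does not go through as stated: the coefficients $\varphi_{\alpha\beta},\psi_{\alpha\beta}$ depend on the \emph{global} extreme eigenvalues of $YX^{-1}$, so $*_t$ does not commute with restriction to an arbitrary invariant subspace, and there is no single-scalar reduction at all (in $\R_{>0}$ one always has $\alpha=\beta$, which is the degenerate branch of the formula, not the generic one). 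The reduction can be repaired by restricting specifically to the 2D subspace spanned by the $\alpha$- and $\beta$-eigenvectors and separately observing that the extreme eigenvalues of $(X*_tY)X^{-1}$ are attained there because $\lambda\mapsto\varphi_{\alpha\beta}(t)\lambda+\psi_{\alpha\beta}(t)$ is increasing — but at that point you have essentially done the paper's direct computation, which is cleaner: since $\varphi_{\alpha\beta}(t)>0$, one simply reads off $\lambda_{\max}((X*_tY)X^{-1})=\varphi_{\alpha\beta}(t)\beta+\psi_{\alpha\beta}(t)=\beta^t$ and $\lambda_{\min}((X*_tY)X^{-1})=\varphi_{\alpha\beta}(t)\alpha+\psi_{\alpha\beta}(t)=\alpha^t$, with no dimension reduction needed.
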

\begin{proof}
    \cref{geodesic_consistency_eq1} follows from the observation that for $0< \alpha\leq\beta$,
    $$\varphi_{\alpha\beta}(t) = \psi_{\beta^{-1}\alpha^{-1}}(1-t).$$
    So writing $\alpha = \lambda_{\min}(YX^{-1})$, $\beta = \lambda_{\max}(YX^{-1})$ we have by (\ref{geodesic_eq})
    $$X*_tY = \varphi_{\alpha\beta}(t)Y+\psi_{\alpha\beta}(t)X = \varphi_{\beta^{-1}\alpha^{-1}}(1-t)X+\psi_{\beta^{-1}\alpha^{-1}}(1-t)Y = Y*_{1-t}X.$$
    For \cref{geodesic_consistency_eq2}, suppose $\beta > \alpha$. Then
    \begin{equation*}
    \begin{aligned}
    \lambda_{\max}((X*_t Y)X^{-1})&= \lambda_{\max}((\varphi_{\alpha\beta}(t)Y+\psi_{\alpha\beta}(t)X)X^{-1}) \\
    &=\varphi_{\alpha\beta}(t)\lambda_{\max}(YX^{-1}) +\psi_{\alpha\beta}(t)\\
    &=\frac{\beta^t-\alpha^t}{\beta-\alpha}\beta+\frac{\beta\alpha^t-\alpha\beta^t}{\beta-\alpha}\\
    &=\beta^t.
    \end{aligned}
    \end{equation*}
    Similarly,
    $$\lambda_{\min}((X*_t Y)X^{-1}) = \alpha^t.$$
    These also hold when $\beta=\alpha$, and the proof is easier. Now use \cref{geodesic_eq} substituting the variables appropriately, or alternatively use \cite[Equation 1.25]{Nussbaum1994}, to get \cref{geodesic_consistency_eq2}. For \cref{geodesic_consistency_eq3},
    $$(X*_sY)*_tY= Y*_{1-t}(Y*_{1-s}X)= Y*_{(1-t)(1-s)}X = X*_{s+t-st} Y$$
    by \cref{geodesic_consistency_eq1} and \cref{geodesic_consistency_eq2}.
\end{proof}

For $i\in \N$ and $p\in\Z_{\geq 0}$ define the maps $S_i:\mathbb{S}^n_{++}\to\mathbb{S}^n_{++}$ and $T_p:\mathbb{S}^n_{++}\to\mathbb{S}^n_{++}$ by
\begin{equation*}
S_i: X\mapsto X*_{\frac{1}{i+1}}Y_j,
\end{equation*}
where $0\leq j\leq k$ is such that $j \equiv i \mod k$, and
\begin{equation*}
T_p:    X\mapsto \left(\dots(X*_{\frac{1}{pk+2}}Y_1)*_{\frac{1}{pk+3}}\dots\right)*_{\frac{1}{(p+1)k+1}}Y_k.
\end{equation*}
So if we pick an initialization $X_1\in\mathbb{S}^n_{++}$ for the algorithm, we have
\begin{equation*}
    X_{i+1}= S_i(X_i) \quad \mathrm{and} \quad X_{(p+1)k+1} = T_p(X_{pk+1}).
\end{equation*}
We will later need the following observation.

\begin{lemma}\label{scalar_lemma}
    If $c>0$, $i\in\N$ and $p\in\Z_{\geq 0}$, then
    \begin{equation*}
        S_i(cX)= c^{\frac{i}{i+1}}S_i(X) \quad \mathrm{and} \quad T_p(cX) = c^{\frac{pk+1}{(p+1)k+1}}T_p(X).
    \end{equation*}
\end{lemma}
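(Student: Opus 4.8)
The plan is to use the joint-homogeneity structure of the $*_t$ geodesic established earlier. Recall from the Joint Homogeneity proposition that $(\mu_1 X_1)*_t(\mu_2 X_2) = \mu_1^{1-t}\mu_2^t (X_1 *_t X_2)$. Since each $Y_j$ is fixed (not scaled), we apply this with $\mu_2 = 1$: for $c>0$ and any $X\in\mathbb{S}^n_{++}$ and $t\in\mathbb{R}$,
\begin{equation*}
(cX)*_t Y_j = c^{1-t}\bigl(X*_t Y_j\bigr).
\end{equation*}
First I would record this specialization; it is the single ingredient for the $S_i$ claim. Taking $t = \frac{1}{i+1}$ gives $1-t = \frac{i}{i+1}$, so $S_i(cX) = (cX)*_{\frac{1}{i+1}}Y_j = c^{\frac{i}{i+1}}\bigl(X*_{\frac{1}{i+1}}Y_j\bigr) = c^{\frac{i}{i+1}}S_i(X)$, exactly as stated.

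For the $T_p$ claim I would argue by iterating the $S_i$ identity through the $k$ composed geodesic steps that make up $T_p$, i.e. the steps indexed $i = pk+1, pk+2, \dots, (p+1)k$. Concretely, I would prove by (finite downward or upward) induction that if a scalar factor $c$ is pulled out at the start of a block of composed $S_i$ maps, the accumulated exponent after applying $S_{m}, S_{m+1}, \dots, S_{m+r-1}$ in succession is $\prod_{\ell=0}^{r-1}\frac{m+\ell}{m+\ell+1}$, which telescopes to $\frac{m}{m+r}$. Applying this with $m = pk+1$ and $r = k$ yields the accumulated exponent $\frac{pk+1}{(p+1)k+1}$, which is precisely the claimed exponent for $T_p$. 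Since $T_p = S_{(p+1)k}\circ S_{(p+1)k-1}\circ\cdots\circ S_{pk+1}$ (matching the definition of $T_p$ with the indices in Line~\ref{line3} of \cref{alg:inductive mean}), this gives $T_p(cX) = c^{\frac{pk+1}{(p+1)k+1}}T_p(X)$.

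There is essentially no obstacle here: the only mild point of care is bookkeeping the index ranges so that the telescoping product of $\frac{m+\ell}{m+\ell+1}$ terms starts at $pk+1$ and runs over exactly $k$ factors, matching the subscripts $\frac{1}{pk+2},\dots,\frac{1}{(p+1)k+1}$ appearing in the definition of $T_p$. One should also note the edge case $p=0$ (or more generally when some $\alpha=\beta$ occurs at an intermediate step), where the homogeneity identity still holds because the Joint Homogeneity proposition was proved for all $t\in\mathbb{R}$ and both branches of \cref{Nussbaum geodesic}; I would remark on this rather than treat it separately. The whole argument is a couple of lines of algebra plus a telescoping product.
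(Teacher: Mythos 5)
Your proposal is correct and takes essentially the same route as the paper: the paper's proof derives the scaling $\varphi_{(\alpha/c)(\beta/c)}(t)=c^{1-t}\varphi_{\alpha\beta}(t)$, $\psi_{(\alpha/c)(\beta/c)}(t)=c^{-t}\psi_{\alpha\beta}(t)$ directly and plugs into \cref{geodesic_eq}, which is exactly the computation underlying the Joint Homogeneity proposition you invoke, so citing it rather than re-deriving is just a small organizational shortcut. Your telescoping product $\prod_{\ell=0}^{k-1}\frac{pk+1+\ell}{pk+2+\ell}=\frac{pk+1}{(p+1)k+1}$ makes explicit the $T_p$ step that the paper leaves to the reader, and it is checked correctly.
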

\begin{proof}
    Observe that for $0<\alpha\leq\beta$ and $c>0$,
    \begin{equation*}
    \varphi_{(\alpha/c)(\beta/c)}(t) = c^{1-t}\varphi_{\alpha\beta}(t) \quad \mathrm{and} \quad
    \psi_{(\alpha/c)(\beta/c)}(t) = c^{-t}\psi_{\alpha\beta}(t)
    \end{equation*}
    and use the expression \cref{geodesic_eq}.
\end{proof}

For $1\leq j\leq k$ and $X\in\mathbb{S}^n_{++}(n)$, let $\varphi^j_X =\varphi_{\alpha\beta}$ and $\psi^j_X =\psi_{\alpha\beta}$ where $\alpha = \lambda_{\min}(Y_jX^{-1})$ and $\beta = \lambda_{\max}(Y_jX^{-1})$. We will also write $m^j_X = \frac{d\varphi^j_X}{dt}(0)$ and $o^j_X = \frac{d\psi^j_X}{dt}(0)$. Note that by \cref{derivatives} we see that $m^j_X$ is always positive, while $o^j_X$ may be positive, negative or zero.

From now on we will write $\|\cdot\|$ for the Euclidean (i.e. Frobenius) norm on matrices. 

\begin{lemma}\label{m_lipschitz}
For $1\leq j\leq k$, the maps from $\mathbb{S}^n_{++}$ to $\mathbb{R}$ given by 
\begin{equation*}
X \mapsto \varphi^j_X\text{, } \quad X \mapsto \psi^j_X\text{, } \quad X \mapsto \frac{d^2\varphi^j_X}{dt^2} \text{, } \quad X \mapsto \frac{d^2\psi^j_X}{dt^2}
\end{equation*}
are continuous. Moreover, if $\K\subset \mathbb{S}^n_{++}(n)$ is a compact set, the maps
\begin{equation*}
X \mapsto m^j_X \text{, }  \quad X \mapsto o^j_X
\end{equation*}
from $\K$ to $\mathbb{R}$
are Lipschitz with respect to the metric induced by $\|\cdot\|$ on $\K$, and in particular they are continuous.
\end{lemma}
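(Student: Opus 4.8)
The plan is to factor each of the six maps through the ``spectral map''
$\sigma_j\colon \mathbb{S}^n_{++}\to\{(\alpha,\beta)\colon 0<\alpha\leq\beta\}$, $X\mapsto(\lambda_{\min}(Y_jX^{-1}),\lambda_{\max}(Y_jX^{-1}))$, followed by the appropriate ``coefficient map'' in the variables $(\alpha,\beta)$, namely $(\alpha,\beta)\mapsto\varphi_{\alpha\beta}(\dash)$, $\psi_{\alpha\beta}(\dash)$, $\frac{d^2\varphi_{\alpha\beta}}{dt^2}(\dash)$, $\frac{d^2\psi_{\alpha\beta}}{dt^2}(\dash)$, $m_{\alpha\beta}:=\frac{d\varphi_{\alpha\beta}}{dt}(0)$, $o_{\alpha\beta}:=\frac{d\psi_{\alpha\beta}}{dt}(0)$, and to prove the required regularity of each factor separately before composing. (Read literally the continuity statement concerns functions of $t$; we will in fact establish joint continuity, indeed smoothness, in $(X,t)$, which gives continuity of $X\mapsto\varphi^j_X(t)$ for every fixed $t$ and, since $[0,1]$ is compact, continuity of $X\mapsto\varphi^j_X$ into $C([0,1])$.)

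\emph{Step 1: the spectral map is locally Lipschitz.} Since $Y_jX^{-1}$ is similar to the symmetric matrix $Y_j^{1/2}X^{-1}Y_j^{1/2}$, we have $\lambda_{\min/\max}(Y_jX^{-1})=\lambda_{\min/\max}(Y_j^{1/2}X^{-1}Y_j^{1/2})$. The map $X\mapsto Y_j^{1/2}X^{-1}Y_j^{1/2}$ from $\mathbb{S}^n_{++}$ into $\Sym(n)$ is rational with nonvanishing denominator $\det X$, hence $C^\infty$ and in particular locally Lipschitz; and $\lambda_{\min},\lambda_{\max}\colon(\Sym(n),\|\dash\|)\to\R$ are globally $1$-Lipschitz by Weyl's perturbation inequality. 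Composing, $\sigma_j$ is locally Lipschitz on $\mathbb{S}^n_{++}$; restricted to any compact $\K$ it is Lipschitz, with image $\sigma_j(\K)$ a compact subset of $\{0<\alpha\leq\beta\}$.

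\emph{Step 2: the coefficient maps are smooth in $(\alpha,\beta)$ on $(0,\infty)^2$.} The only delicate point is the apparent singularity along $\alpha=\beta$, which is resolved by the divided-difference integral representations
\[
\frac{\beta^{t}-\alpha^{t}}{\beta-\alpha}=\int_0^1 t\bigl((1-s)\alpha+s\beta\bigr)^{t-1}\,ds
\]
and
\[
\frac{\log\beta-\log\alpha}{\beta-\alpha}=\int_0^1\bigl((1-s)\alpha+s\beta\bigr)^{-1}\,ds,
\]
both of which remain valid at $\alpha=\beta$, where they reduce to $t\alpha^{t-1}$ and $\alpha^{-1}$ — i.e. to the second branches in \cref{derivatives}. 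On any compact subset of $(0,\infty)^2$ the convex combination $(1-s)\alpha+s\beta$ stays in a fixed compact subinterval of $(0,\infty)$, so the integrands and all of their $(\alpha,\beta,t)$-derivatives are bounded uniformly in $s\in[0,1]$; differentiating under the integral sign then shows that $\varphi_{\alpha\beta}(t)$ and $m_{\alpha\beta}$ are $C^\infty$ (indeed real-analytic) jointly in $(\alpha,\beta,t)$, respectively $(\alpha,\beta)$. The elementary identities $\psi_{\alpha\beta}(t)=\alpha^{t}-\alpha\,\varphi_{\alpha\beta}(t)$ and $o_{\alpha\beta}=\log\alpha-\alpha\,m_{\alpha\beta}$, which one checks directly against \cref{geodesic_eq} and \cref{derivatives}, transfer this regularity to $\psi$ and $o$, and the second $t$-derivatives inherit it by further differentiation in $t$.

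\emph{Step 3: composition, and the main obstacle.} Continuity of $X\mapsto\varphi^j_X$, $\psi^j_X$, and their second $t$-derivatives is then immediate, each being a $C^\infty$ function of $(\alpha,\beta,t)$ precomposed with the continuous $\sigma_j$. For the Lipschitz assertion, fix a compact $\K\subset\mathbb{S}^n_{++}$: by Step 1, $\sigma_j|_\K$ is Lipschitz onto a compact $\mathcal{C}\subset(0,\infty)^2$, and by Step 2 the maps $(\alpha,\beta)\mapsto m_{\alpha\beta},o_{\alpha\beta}$ are $C^1$, hence Lipschitz on $\mathcal{C}$; a composition of Lipschitz maps is Lipschitz, so $X\mapsto m^j_X,o^j_X$ are Lipschitz on $\K$. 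The one point that genuinely needs care is Step 2 — verifying that the removable singularities of these quotients along $\alpha=\beta$ are not merely continuous but $C^\infty$ (or at least $C^1$ for $m_{\alpha\beta}$ and $o_{\alpha\beta}$, which is all that is strictly required); the integral representation is what makes this transparent, the alternative being a messier expansion in powers of $\beta-\alpha$ with coefficients smooth in $\alpha$ and $t$. Everything else is routine: eigenvalue perturbation bounds, smoothness of rational maps on $\mathbb{S}^n_{++}$, and stability of the Lipschitz property under composition.
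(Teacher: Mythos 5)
Your proposal is correct and follows the same structural blueprint as the paper's proof: both factor the map through the spectral map $X\mapsto(\lambda_{\min}(Y_jX^{-1}),\lambda_{\max}(Y_jX^{-1}))$ and then compose with a ``divided-difference'' coefficient map, and both conclude by noting that a composition of (locally) Lipschitz maps on compact sets is Lipschitz. The genuine difference is in how the regularity of the coefficient map at the diagonal $\alpha=\beta$ is established. The paper handles this by explicit computation: it writes out $\partial_\beta$ of the map $(\alpha,\beta)\mapsto(\log\beta-\log\alpha)/(\beta-\alpha)$, takes the limit $(\alpha,\beta)\to(\gamma,\gamma)$ via two applications of l'H\^opital's rule, and checks continuity of the partials there, concluding $C^1$ and hence locally Lipschitz, then asserts the remaining maps ($o^j_X$, $\varphi^j_X$, $\psi^j_X$ and their second $t$-derivatives) are treated ``analogously.'' You instead use the Hermite (divided-difference) integral representation $\frac{\beta^t-\alpha^t}{\beta-\alpha}=\int_0^1 t((1-s)\alpha+s\beta)^{t-1}\,ds$, which exhibits the smoothness across $\alpha=\beta$ at a glance, yields $C^\infty$ (not just $C^1$) uniformly in all variables, and transparently covers the higher $t$-derivatives by differentiating under the integral; the identities $\psi_{\alpha\beta}(t)=\alpha^t-\alpha\,\varphi_{\alpha\beta}(t)$ and $o_{\alpha\beta}=\log\alpha-\alpha\,m_{\alpha\beta}$ then transport the regularity to the $\psi$- and $o$-maps with no new computation. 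Your Step~1 is also a minor streamlining: rather than the paper's five-map chain with a coordinate inversion $(\lambda_{\max}(Y_jX^{-1}),\lambda_{\max}(XY_j^{-1}))\mapsto(\lambda_{\max},\lambda_{\min})$, you pass to the symmetric matrix $Y_j^{1/2}X^{-1}Y_j^{1/2}$ and read off both extreme eigenvalues via Weyl's inequality in one stroke. Both routes are sound; yours replaces case-checking and l'H\^opital with a single integral formula that does all the work, at the cost of having to produce that formula.
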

\begin{proof}
We will show that $X \mapsto m^j_X$ is locally Lipschitz. The proof for $X \to o^j_X$ is analogous and the continuity of the other maps can also be shown in a similar fashion. 

$X \mapsto m^j_X$ can be expressed as the composition of the five maps
\begin{gather*}
\K\xrightarrow{\text{ I }}\K\times\K\xrightarrow{\text{ II }}\K\times\K \xrightarrow{\text{ III }} \R_{>0}\times\R_{>0}\xrightarrow{\text{ IV }} \R_{>0}\times\R_{>0}\xrightarrow{\text{ V }}  \R_{>0}  \\
X \xmapsto{\text{ I }}  (X,X^{-1}) \xmapsto{\text{ II }} (Y_jX^{-1}, XY_j^{-1}) \\
\xmapsto{\text{ III }} (\lambda_{\max}(Y_jX^{-1}), \lambda_{\max}(XY_j^{-1})) \xmapsto{\text{ IV }} (\lambda_{\max}(Y_jX^{-1}), \lambda_{\min}(Y_jX^{-1})) \xmapsto{\text{ V }} m_X^j.
\end{gather*}
Let us consider whether each of these five maps is Lipschitz. 

\begin{enumerate}[label=\Roman{*}:]
    \item Inversion is a smooth operation on the invertible matrices and $\K$ is a compact set of invertible matrices, so I is Lipschitz.
    \item Matrix multiplication is a smooth operation on matrices, so II is Lipschitz.
    \item This map is Lipschitz (and in fact non-expansive) with respect to the matrix norm $\|\cdot\|$.
    \item This map is given by inversion of the second coordinate. This is not Lipschitz. However we could restrict the domain to a compact subset of $\R_{>0}\times\R_{>0}$, since the image of the continuous map $(\text{map I})\circ(\text{map II})\circ(\text{map III})$ is a compact set. Then IV is Lipschitz on this domain.
    \item Explicitly, this map takes the form
\begin{equation*}
(\alpha,\beta)\mapsto \begin{cases}
    \frac{\log \beta-\log \alpha}{\beta-\alpha} & \text{ if } \beta \neq \alpha\\
    \frac{1}{\alpha} & \text{ if } \beta = \alpha.
\end{cases}
\end{equation*}
\end{enumerate}

We do not need V to be Lipschitz, we only need it to be locally Lipschitz, and then restrict the domain to a compact set like we did for IV. For this we show that its partial derivatives exist and are continuous. For $\beta \neq \alpha$,
$$\frac{\partial}{\partial \beta}(\text{map V})(\alpha,\beta) = \frac{1-\alpha/\beta-\log\beta+\log\alpha}{(\beta-\alpha)^2}.$$
As $(\alpha,\beta) \to (\gamma,\gamma)$,  the above tends to $-\frac{1}{2\gamma^2}$. This can be shown by letting $\alpha = \gamma + ta$ and $\beta = \gamma +tb$ for some $b$ and $a$, letting $t\to 0$ and applying l'Hôpital's rule twice. Moreover we have
$$\frac{\partial}{\partial \beta}(\text{map V})(\gamma,\gamma) = -\frac{1}{2\gamma^2}.$$
So the $\frac{\partial}{\partial\beta}$ derivatives exist and are continuous. The argument for the $\frac{\partial}{\partial\alpha}$ derivatives is analogous. This shows V is $C^1$ and hence locally Lipschitz. 

Now $m^j_X$ is Lipschitz in $X\in\K$ since it is a composition of Lipschitz maps.
\end{proof}

Pick an initialization $X_1\in\mathbb{S}^n_{++}$ and let $\C = \conv\{X_1,Y_1,\dots,Y_k\}$, where $\conv$ is used to denote the Euclidean convex hull.
If $X\in\mathbb{S}^n_{++}$, $1\leq j\leq k$ and $t\in[0,1]$,
\begin{equation}\label{projective_eq}
X*_tY_j = \big(\varphi^j_X(t)+\psi^j_X(t)\big)\Big(\frac{\varphi^j_X(t)}{\varphi^j_X(t)+\psi^j_X(t)}Y_j + \frac{\psi^j_X(t)}{\varphi^j_X(t)+\psi^j_X(t)}X\Big).
\end{equation}
Write $\R_{>0}\cdot \C = \{cX: X\in\C,c>0\}$. Then \cref{projective_eq} tells us that for all $i\in\N$, $S_i$ maps $\C$ to $\R_{>0}\cdot\C$. So by \cref{scalar_lemma} it maps $\R_{>0}\cdot\C$ to itself. In particular $X_i \in \R_{>0}\cdot\C$ for all $i\in\N$. 

\begin{lemma}\label{lim_lemma}
    There is $X^*\in\R_{>0}\cdot\C$ such that
    \begin{equation}\label{limit_eq}
    m^k_{X^*}Y_k+\dots+m^1_{X^*}Y_1+\sum_{j=1}^k o^j_{X^*}X^*=0.
    \end{equation}
\end{lemma}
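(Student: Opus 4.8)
The plan is to realize $X^{*}$ through a fixed-point argument for the map $X\mapsto\sum_{j=1}^{k}m^{j}_{X}Y_{j}$, and then to remove the resulting scalar ambiguity by a single rescaling. Write $V(X):=m^{k}_{X}Y_{k}+\dots+m^{1}_{X}Y_{1}+\sum_{j=1}^{k}o^{j}_{X}X$ for the left-hand side of \cref{limit_eq}; the goal is to find $X^{*}\in\R_{>0}\cdot\C$ with $V(X^{*})=0$. As a preliminary I would record that $\C=\conv\{X_{1},Y_{1},\dots,Y_{k}\}$ is a compact convex subset of $\mathbb{S}^n_{++}$, so $\operatorname{tr}$ is bounded between two positive constants on $\C$, the set $\R_{>0}\cdot\C$ is convex, and $\R_{\ge 0}\cdot\C$ is closed; hence $D:=\{X\in\R_{>0}\cdot\C:\operatorname{tr}X=1\}$ is a nonempty compact convex subset of the hyperplane $\{\operatorname{tr}=1\}$, contained in $\mathbb{S}^n_{++}$.

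Next I would apply Brouwer's theorem. Define $F:\R_{>0}\cdot\C\to\mathbb{S}^n_{++}$ by $F(X):=\sum_{j=1}^{k}m^{j}_{X}Y_{j}$. Since each $m^{j}_{X}>0$ by \cref{derivatives}, $F(X)$ is a strictly positive combination of $Y_{1},\dots,Y_{k}$, so $F(X)\in\R_{>0}\cdot\conv\{Y_{1},\dots,Y_{k}\}\subseteq\R_{>0}\cdot\C$ and $\operatorname{tr}F(X)>0$. By \cref{m_lipschitz} the maps $X\mapsto m^{j}_{X}$ are continuous, so $\hat F:D\to D$, $\hat F(X):=F(X)/\operatorname{tr}F(X)$, is a continuous self-map of $D$; Brouwer's fixed point theorem yields $X_{0}\in D$ with $F(X_{0})=\mu X_{0}$, where $\mu:=\operatorname{tr}F(X_{0})>0$. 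Substituting into the definition of $V$ gives $V(X_{0})=\big(\mu+\sum_{j=1}^{k}o^{j}_{X_{0}}\big)X_{0}=:\lambda X_{0}$ for a real scalar $\lambda$.

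It then remains to annihilate $\lambda$. Using the identities $\varphi_{(\alpha/c)(\beta/c)}(t)=c^{1-t}\varphi_{\alpha\beta}(t)$ and $\psi_{(\alpha/c)(\beta/c)}(t)=c^{-t}\psi_{\alpha\beta}(t)$ from the proof of \cref{scalar_lemma}, together with $\lambda_{\min}(Y_{j}(cX)^{-1})=c^{-1}\lambda_{\min}(Y_{j}X^{-1})$ and the analogous identity for $\lambda_{\max}$, differentiation at $t=0$ gives the homogeneity relations $m^{j}_{cX}=c\,m^{j}_{X}$ and $o^{j}_{cX}=o^{j}_{X}-\log c$ for every $c>0$; hence $V(cX)=c\big(V(X)-k(\log c)X\big)$. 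Applying this at $X=X_{0}$ yields $V(cX_{0})=c(\lambda-k\log c)X_{0}$, so the choice $c=e^{\lambda/k}$ makes the bracket vanish. Then $X^{*}:=e^{\lambda/k}X_{0}$ is a positive rescaling of a point of $D\subseteq\R_{>0}\cdot\C$, hence lies in $\R_{>0}\cdot\C$, and satisfies $V(X^{*})=0$, which is precisely \cref{limit_eq}.

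The load-bearing points are that $D$ is genuinely compact (this is where the geometry of $\C$ is used) and that $F$ maps continuously into $\R_{>0}\cdot\C$ rather than merely into $\mathbb{S}^n_{++}$ (this is where $m^{j}_{X}>0$ and \cref{m_lipschitz} enter). The conceptual crux, however—and the step I expect to be the real obstacle—is that Brouwer only forces $V(X_{0})$ to be \emph{proportional} to $X_{0}$, not to vanish; it is exactly the failure of $o^{j}$ to be homogeneous, namely its additive shift by $-\log c$ under $X\mapsto cX$, that allows one further rescaling to kill the leftover scalar $\lambda$. As an alternative (heavier) route one could instead obtain $X^{*}$ as a subsequential limit of renormalized fixed points of the maps $T_{p}$, using the first-order expansion $T_{p}(X)=X+\tfrac{1}{pk}V(X)+o(1/p)$ uniformly on compacta (from Taylor-expanding $X*_{t}Y_{j}$ at $t=0$ and the continuity estimates of \cref{m_lipschitz}); the same rescaling trick is still required to pass from $V(X^{*})\parallel X^{*}$ to $V(X^{*})=0$.
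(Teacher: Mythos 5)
Your proof is correct and follows essentially the same route as the paper: apply Brouwer's fixed point theorem to a projectively normalized version of $X\mapsto\sum_j m^j_X Y_j$, then exploit the homogeneity $m^j_{cX}=c\,m^j_X$ together with the additive shift $o^j_{cX}=o^j_X-\log c$ to choose one final scalar $c$ that kills the proportionality constant. The only cosmetic difference is the normalization: the paper divides by $\sum_j m^j_X$, which makes $F$ a self-map of the convex compact set $\C$ directly, whereas you normalize by trace and work on the slice $D=\{X\in\R_{>0}\cdot\C:\operatorname{tr}X=1\}$; both yield the same rescaling identity $V(cX)=c\bigl(V(X)-k(\log c)X\bigr)$ and hence the same conclusion.
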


\begin{proof}
Consider the map $F:\C\to\C$ defined by
\begin{equation*}
F: X \mapsto \frac{m^k_X Y_k +\dots+m^1_X Y_1}{\sum_{j=1}^k m^j_X}.
\end{equation*}
This is a continuous map (by continuity of the $m^j_X$ in $X$, \cref{m_lipschitz}) from the convex compact set $\C$ to itself, so it has a fixed point $X^{**}$ by Brouwer's fixed point theorem \cite[Theorem 4.10]{agarwal_fixed_2001}. Thus we have
\begin{equation*}
  X^{**} = \frac{m^k_{X^{**}} Y_k +\dots+m^1_{X^{**}} Y_1}{\sum_{j=1}^k m^j_{X^{**}}}.  
\end{equation*}
Now try $X^* = cX^{**}$ for $c>0$ in (\ref{limit_eq}). Using the relations $m^j_{cX^{**}} = cm^j_{X^{**}}$ and $o^j_{cX^{**}} = o^j_{X^{**}} - \log c$ and solving for $c$ we get a solution
\begin{equation}
    X^* = \exp\bigg(\frac{\sum_{j=1}^km^j_{X^{**}}+\sum_{j=1}^ko^j_{X^{**}}}{k}\bigg)X^{**}.
\end{equation}
\end{proof}
From now on $X^*$ will denote a point satisfying the conditions of \cref{lim_lemma}.
\begin{remark} Eventually we will show that $X_i \to X^* \text{ as } i\to\infty$ (and thus that $X^*$ is uniquely defined). This can be understood intuitively: writing out \cref{limit_eq} in a more explicit notation we have
\begin{equation}\label{exp_limit_eq}
\frac{d\varphi^k_{X^*}}{dt}(0)Y_k+\dots+\frac{d\varphi^1_{X^*}}{dt}(0)Y_1+\Big(\frac{d\psi^k_{X^*}}{dt}(0)+\dots+\frac{d\psi^1_{X^*}}{dt}(0)\Big)X^*=0.
\end{equation}
(\ref{exp_limit_eq}) is, loosely speaking, the infinitesimal version (taking $p\to \infty$) of the equation
\begin{equation*}
    T_p(X)=X,
\end{equation*}
which characterises the fixed point(s) of $T_p$.
\end{remark}

We are now in a position to prove the following crucial lemma.
\begin{lemma}\label{fixed_pt_lemma}
    Let $\K\subset\mathbb{S}^n_{++}$ be a compact set with $X^*\in\K$. Then there exist $O,K >0$ such that for all $X\in\K$ and $p\in\Z_{\geq 0}$,
    \begin{equation}\label{expansion_eq}
    \|T_p(X)-X\| \leq \frac{K}{p}\|X-X^*\|+\frac{O}{p^2}.
    \end{equation}
    In particular,
    \begin{equation}\label{fixed_pt_eq}
    \|T_p(X^*)-X^*\| \leq \frac{O}{p^2}.
    \end{equation}
\end{lemma}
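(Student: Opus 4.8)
The plan is to obtain \cref{expansion_eq} by Taylor expanding, to first order in the parameter $t$, each of the $k$ geodesic steps whose composition is $T_p$, and then controlling every error term uniformly in $X\in\K$ and in $p$. Recall from \cref{geodesic_eq} that the step with parameter $\tfrac1m$ sends a matrix $Z$ to $Z*_{1/m}Y_j=\varphi^j_Z(\tfrac1m)\,Y_j+\psi^j_Z(\tfrac1m)\,Z$, where for $T_p$ the $j$-th step has $m=pk+j+1$. Since $\varphi^j_Z(0)=0$ and $\psi^j_Z(0)=1$ (as $Z*_0Y_j=Z$), Taylor's theorem with Lagrange remainder gives
\[
Z*_{1/m}Y_j = Z + \tfrac1m\big(m^j_Z\,Y_j + o^j_Z\,Z\big) + R_j(Z),\qquad \|R_j(Z)\|\le \tfrac{C}{m^2},
\]
for a constant $C$ depending only on $Y_j$ and on sup-bounds of $\|\tfrac{d^2\varphi^j_Z}{dt^2}\|$ and $\|\tfrac{d^2\psi^j_Z}{dt^2}\|$ over $t\in[0,1]$ and over $Z$ in a fixed compact subset of $\mathbb{S}^n_{++}$; such bounds exist by the continuity assertions in \cref{m_lipschitz} together with compactness. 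Accordingly, the first step is to fix one compact set $\K'\supseteq\K$ inside $\mathbb{S}^n_{++}$ that contains every intermediate iterate produced by $T_p$ for all $X\in\K$ and all $p\ge 0$: this is possible because each geodesic arc $Z\mapsto Z*_{[0,1]}Y_j$ carries a compact set to a compact set (boundedness follows from \cref{projective_eq}) and only finitely many such arcs are composed.

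Writing $X_{(0)}=X$ and $X_{(j)}$ for the iterate after $j$ steps, the displayed expansion applied on $\K'$ shows $\|X_{(j)}-X\|=O(1/p)$ uniformly over $X\in\K$, and summing the $k$ steps gives
\[
T_p(X)-X=\sum_{j=1}^k \frac{1}{pk+j+1}\big(m^j_{X_{(j-1)}}Y_j+o^j_{X_{(j-1)}}X_{(j-1)}\big)+O(1/p^2).
\]
I would then replace $X_{(j-1)}$ by $X$ throughout the sum. Since $m^j$ and $o^j$ are \emph{Lipschitz} on the compact set $\K'$ (this is precisely where the Lipschitz part of \cref{m_lipschitz}, not merely continuity, is needed), the discrepancies $|m^j_{X_{(j-1)}}-m^j_X|$, $|o^j_{X_{(j-1)}}-o^j_X|$ and $\|X_{(j-1)}-X\|$ are all $O(1/p)$, and each is multiplied by a coefficient $\tfrac{1}{pk+j+1}=O(1/p)$, so the total change is $O(1/p^2)$; replacing $\tfrac{1}{pk+j+1}$ by $\tfrac1{pk}$ costs a further $O(1/p^2)$. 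This yields
\[
T_p(X)-X=\frac{1}{pk}\,G(X)+O(1/p^2),\qquad G(X):=\sum_{j=1}^k m^j_X Y_j+\Big(\sum_{j=1}^k o^j_X\Big)X,
\]
with the $O(1/p^2)$ uniform over $X\in\K$.

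To conclude, note that \cref{lim_lemma} is exactly the statement $G(X^*)=0$. On the compact set $\K'$ the map $G$ is Lipschitz, being a finite sum of products of the Lipschitz functions $m^j,o^j$ (\cref{m_lipschitz}) with the bounded Lipschitz maps $X\mapsto Y_j$ and $X\mapsto X$; let $L$ be a Lipschitz constant. Then $\|G(X)\|=\|G(X)-G(X^*)\|\le L\|X-X^*\|$, and hence
\[
\|T_p(X)-X\|\le \frac{L}{pk}\,\|X-X^*\|+\frac{O}{p^2},
\]
which is \cref{expansion_eq} with $K:=L/k$ and a suitable $O$, after enlarging the constants to absorb the finitely many small values of $p$ (a trivial adjustment, since $\|T_p(X)-X\|$ is bounded on $\K$ for each such $p$). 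Specialising to $X=X^*$ kills the first term and gives \cref{fixed_pt_eq}.

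The only genuine difficulty is bookkeeping: every $O(\cdot)$ above must be uniform in $X\in\K$ \emph{and} in $p$ at once. This reduces to two points — the existence of a single enlarged compact set $\K'$ trapping all intermediate iterates, and uniform bounds on $\K'$ for the second derivatives of $\varphi^j,\psi^j$ and for the Lipschitz constants of $m^j,o^j$ — both supplied by \cref{m_lipschitz}, \cref{projective_eq}, and the fact that only $k$ geodesic operations are composed. No individual step is deep, but the uniformity must be threaded carefully through the composition.
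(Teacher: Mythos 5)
Your proof is correct and follows essentially the same approach as the paper: Taylor-expand each of the $k$ geodesic steps to first order, trap all intermediate iterates in a common compact set, control the substitutions (replacing $X_{(j-1)}$ by $X$ and $\tfrac{1}{pk+j+1}$ by $\tfrac{1}{pk}$) at cost $O(1/p^2)$ via the Lipschitz estimates of \cref{m_lipschitz}, and finally invoke $G(X^*)=0$ from \cref{lim_lemma} to extract the factor $\|X-X^*\|$. Your packaging of the last step through the Lipschitz map $G$ is a slightly cleaner formulation, but it encodes exactly the term-by-term subtraction involving $m^j_{X^*}$ and $o^j_{X^*}$ that the paper carries out explicitly.
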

\begin{proof}
For $1\leq j\leq k$, we define recursively
$$\K_j = \{X*_tY_j: X\in\K_{j-1}, t\in [0,1]\}$$
where $\K_0 = \K$. The $\K_j$ are continuous images of compact sets since the $\varphi^j_X$ and $\psi^j_X$ are continuous in $X$ (\cref{m_lipschitz}). Then for $i\in\N$ and $1\leq j\leq k$ such that $j\equiv i \mod k$, if $X\in\K_{j-1}$,
$$S_i(X) = \varphi^j_X\Big(\frac{1}{i+1}\Big)Y_j + \psi^j_X\Big(\frac{1}{i+1}\Big)X$$
so taking a Taylor expansion to first order we get
\begin{equation}\label{taylor1_eq}
S_i(X) = \frac{m^j_X}{i+1}Y_j + \Big(1+\frac{o^j_X}{i+1}\Big)X + R_j(X,i).
\end{equation}
where $\|R_j(X,i)\|\leq \frac{M_j}{i^2}$  for some $M_j>0$ independent of $X\in\K_{j-1}$. This bound on $R_j$ is possible because $\frac{d^2\varphi^j_X}{dt^2}$ and $\frac{d^2\psi^j_X}{dt^2}$ are uniformly bounded for $X\in\K_{j-1}$ and $t\in[0,\frac{1}{i+1}]\subset [0,1]$ by continuity on these compact sets (\cref{m_lipschitz}). 

Now for $X\in\K$ and $p\in\N$
\begin{equation*}\label{taylork_eq}
\begin{aligned}
    &T_p(X) = \\
    &\frac{m^k_{S_{(p+1)k}(\dots S_{pk+1}(X)\dots)}}{(p+1)k+1}Y_k
    +\Big(1+\frac{o^k_{S_{(p+1)k}(\dots S_{pk+1}(X)\dots)}}{(p+1)k+1}\Big)\frac{m^{k-1}_{S_{(p+1)k-1}(\dots S_{pk+1}(X)\dots)}}{(p+1)k}Y_{k-1} \\
    &\qquad\qquad\qquad +\dots+ \Big(1+\frac{o^k_{S_{(p+1)k}(\dots S_{pk+1}(X)\dots)}}{(p+1)k+1}\Big)\dots\Big(1+\frac{o^1_X}{pk+2}\Big)X+R(X,p) \\
    &=\frac{m^k_{S_{(p+1)k}(\dots S_{pk+1}(X)\dots)}}{pk}Y_k+\dots+\frac{m^1_X}{pk}Y_1 \\ &\qquad\qquad\qquad\qquad\qquad+\Big(1+\frac{o^k_{S_{(p+1)k}(\dots S_{pk+1}(X)\dots)}+\dots+o^1_X}{pk}\Big)X+S(X,p) \\
    &=\frac{m^k_X}{pk}Y_k+\dots+\frac{m^{1}_X}{pk}Y_1+\bigg(1+\frac{\sum_{j=1}^k o^j_X}{pk}\bigg)X+T(X,p) \\
    &=\frac{m^k_X-m^k_{X^*}}{pk}Y_k+\dots+\frac{m^1_X-m^1_{X^*}}{pk}Y_1+\frac{\sum_{j=1}^k(o^j_X-o^j_{X^*})}{pk}X \\
    &\qquad\qquad\qquad\qquad\qquad +\frac{\sum_{j=1}^ko^j_{X^*}}{pk}(X-X^*)+X+T(X,p) \\
    &=X+U(X,p)
\end{aligned}
\end{equation*}
where $R(X,p) \leq\frac{M}{p^2}$, $S(X,p) \leq\frac{N}{p^2}$, $T(X,p) \leq\frac{O}{p^2}$ and $U(X,p) \leq \frac{K}{p}\|X-X^*\|+\frac{O}{p^2}$ for some $L,M,N,O,K>0$ independent of $X\in\C$. The bound on $R$ comes from the expansion \cref{taylor1_eq} applied $k$ times and using the fact that $m^j_{X'}$ and $o^j_{X'}$ are continuous in $X'$ (\cref{m_lipschitz}), so bounded on the compact set $\K_k$. This last observation also gives us the bound on $S$. The bound on $T$ uses the fact that $\|S_j\circ\dots\circ S_1(X)-X\|$ vanishes to order $\frac{1}{p}$ for $X\in\K$ since the $m^j_{X'}$ and $o^j_{X'}$ are bounded for $X'\in\K_k$. Then we use the fact that $m^j_{X'}$ and $o^j_{X'}$ are Lipschitz in $X'\in\K_k$ (\cref{m_lipschitz}). Finally, the bound on $U$ uses the fact that $m^j_{X'}$ and $o^j_{X'}$ are Lipschitz in $X'\in\K$ (\cref{m_lipschitz}) and bounded on that set. This proves the lemma.
\end{proof}
\begin{remark}
Using similar estimates as in the proof of \cref{fixed_pt_lemma}, we can show that there are $O',K'>0$ such that for $X\in\K$ and $p\in\Z_{\geq0}$,
\begin{equation}\label{bound}
\|T_p(X)-X^*\| \leq K'\|X-X^*\|+\frac{O'}{p^2}.
\end{equation}
However it is not clear whether $K'<1$. If not, \cref{bound} is not good enough to show that the point $X^*$ is attractive under our dynamics, so we will need to use more machinery involving the Hilbert projective metric.
\end{remark}

\subsection{Hilbert projective convergence}\label{hilbert_section}

Here we establish convergence of any sequence generated by \cref{alg:inductive mean} in Hilbert's projective geometry. Recall that Hilbert's projective metric $d_H$ takes the form \cref{Hilbert matrix} in $\mathbb{S}^n_{++}$ and satisfies $d_H(cX,c'X)= d_H(X,X')$ for any $X,X'\in\mathbb{S}^n_{++}$ and $c,c'>0$. Moreover, $d_H$ is a metric in the usual sense on the projective space (space of rays) $\mathbb{S}^n_{++}/\R_{>0}$ \cite[Proposition 2.1.1]{Lemmens2012}. To proceed further, we need to be able to translate our estimates in the Euclidean norm to the Hilbert projective metric. This is achieved by the following lemma.
\begin{lemma}\label{hilbert_lemma}
Let $\K\subset \mathbb{S}^n_{++}$ be a compact set. Then there is $C>0$ such that for $X,X'\in\K$
$$d_H(X,X') \leq C\|X-X'\|.$$
\end{lemma}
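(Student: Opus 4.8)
The plan is to bound $d_H(X,X')$ by relating it, via the eigenvalue expression \eqref{Hilbert matrix}, to the ratio $\lambda_{\max}(X'X^{-1})/\lambda_{\min}(X'X^{-1})$, and then to show this ratio is close to $1$ when $X$ and $X'$ are close in Frobenius norm, with a constant that is uniform over the compact set $\K$. Concretely, $d_H(X,X') = \log\lambda_{\max}(X'X^{-1}) - \log\lambda_{\min}(X'X^{-1})$, and since $X'X^{-1}$ is similar to the symmetric matrix $X^{-1/2}X'X^{-1/2}$, its extreme eigenvalues are $\lambda_{\max}$ and $\lambda_{\min}$ of that symmetric matrix. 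Writing $X' = X + E$ with $E = X'-X$ symmetric, we have $X^{-1/2}X'X^{-1/2} = I + X^{-1/2}EX^{-1/2}$, whose eigenvalues all lie in $[1 - \|X^{-1/2}EX^{-1/2}\|_{op},\, 1 + \|X^{-1/2}EX^{-1/2}\|_{op}]$. Hence $d_H(X,X') \leq \log\frac{1+\delta}{1-\delta}$ where $\delta = \|X^{-1/2}EX^{-1/2}\|_{op} \leq \|X^{-1}\|_{op}\|E\|_{op} \leq \|X^{-1}\|_{op}\|E\|$.

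Next I would make the constant uniform. Since $\K$ is compact and matrix inversion is continuous, $\sup_{X\in\K}\|X^{-1}\|_{op} =: \Lambda < \infty$; also $\K$ is bounded, so $\|X - X'\| \leq \diam(\K) =: D$ for $X,X'\in\K$, giving $\delta \leq \Lambda D =: \delta_0 < 1$ is false in general (the diameter need not be small), so I need to split into two regimes. If $\|X-X'\|$ is small enough that $\delta = \Lambda\|X-X'\| \leq 1/2$, then using $\log\frac{1+\delta}{1-\delta} \leq 4\delta$ for $\delta\in[0,1/2]$ we get $d_H(X,X')\leq 4\Lambda\|X-X'\|$. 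If instead $\|X-X'\| > 1/(2\Lambda)$, then since $d_H$ restricted to the compact set $\K/\R_{>0}$ (image of $\K$ in projective space, which is compact as a continuous image) is bounded, say by $R_0$, we have $d_H(X,X') \leq R_0 \leq 2\Lambda R_0 \|X-X'\|$. Taking $C = \max\{4\Lambda,\, 2\Lambda R_0\}$ finishes the argument.

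The two steps requiring a little care are the operator-norm perturbation bound on the extreme eigenvalues (this is just Weyl's inequality applied to $I + X^{-1/2}EX^{-1/2}$, so it is routine) and the claim that $d_H$ is bounded on the image of $\K$ in projective space. For the latter, the cleanest route is to note that $X\mapsto [X]$ is continuous from $\K$ into $(\mathbb{S}^n_{++}/\R_{>0}, d_H)$ — which follows from the small-$\delta$ bound just established, since that already shows $d_H(X,X')\to 0$ as $\|X-X'\|\to 0$ — so the image is a compact subset of the metric space and therefore has finite diameter. I expect the main (and only mild) obstacle to be organizing the two regimes so that a single linear bound $d_H \leq C\|X-X'\|$ holds; everything else is standard perturbation theory and compactness.
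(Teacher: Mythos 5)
Your proof is correct, and it takes a genuinely different route from the paper. The paper establishes the bound by factoring $d_H$ as a composition of five maps (pairing with the inverse, matrix multiplication, extraction of extreme eigenvalues, scalar inversion, and the log-ratio) and showing each factor is Lipschitz on suitable compact sets, reusing the machinery already built for \cref{m_lipschitz}. You instead argue directly via eigenvalue perturbation: writing $X^{-1/2}X'X^{-1/2} = I + X^{-1/2}EX^{-1/2}$ with $E = X'-X$, Weyl's inequality pins the spectrum to $[1-\delta,1+\delta]$ with $\delta \le \|X^{-1}\|_{\mathrm{op}}\|E\|$, so $d_H(X,X') \le \log\frac{1+\delta}{1-\delta}$, and then a case split (small $\|X-X'\|$ vs.\ the rest, using boundedness of $d_H$ on the compact set) produces a single linear bound. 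Your approach is more elementary and self-contained --- it yields an explicit constant $C = \max\{4\Lambda, 2\Lambda R_0\}$ and avoids the somewhat delicate argument in \cref{m_lipschitz} about local Lipschitzness of the map $(\alpha,\beta)\mapsto \frac{\log\beta-\log\alpha}{\beta-\alpha}$ near the diagonal (via l'H\^opital's rule) --- while the paper's approach is terser given the framework already in place. One small notational slip: you redefine $\delta$ as $\Lambda\|X-X'\|$ after having defined it as $\|X^{-1/2}EX^{-1/2}\|_{\mathrm{op}}$; this is harmless since $\log\frac{1+\delta}{1-\delta}$ is increasing in $\delta$ and the latter is bounded by the former, but it is worth stating the monotonicity step explicitly. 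Also, for the global-diameter bound you could skip the detour through the projective space: $d_H$ is continuous on the compact $\K\times\K$ (which your small-$\delta$ estimate already shows), hence attains a finite maximum $R_0$ there.
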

\begin{proof}
For $X,X'\in\K$,
$$d_H(X,X') = \log\Big(\frac{\lambda_{\max}(X'X^{-1})}{\lambda_{\min}(X'X^{-1})}\Big).$$
So $d_H$ is the composition of the five maps
\begin{gather*}
\K\times\K\xrightarrow{\text{ I }}\K\times\K\times\K\times\K\xrightarrow{\text{ II }}\K\times\K \xrightarrow{\text{ III }} \R_{> 0}\times\R_{>0}\xrightarrow{\text{ IV }} \R_{>0}\times\R_{>0}\xrightarrow{\text{ V }}  \R_{\geq 0} \\
(X,X') \xmapsto{\text{ I }} (X,X^{-1},X',X'^{-1}) \xmapsto{\text{ II }} (X'X^{-1},XX'^{-1}) \\
\xmapsto{\text{ III }} (\lambda_{\max}(X'X^{-1}), \lambda_{\max}(XX'^{-1})) \xmapsto{\text{ IV }} (\lambda_{\max}(X'X^{-1}), \lambda_{\min}(X'X^{-1})) \\ 
\xmapsto{\text{ V }} \log\Big(\frac{\lambda_{\max}(X'X^{-1})}{\lambda_{\min}(X'X^{-1})}\Big).
\end{gather*}
Then we can show $d_H$ is Lipschitz analogously to the proof of \cref{m_lipschitz}.
\end{proof}

Let $\K\subset \mathbb{S}^n_{++}$ be a compact set. By \cref{Nussbaum inequality} (\cite[Theorem 1.2]{Nussbaum2004}), we have for $X,X',Y\in\R_{>0}\cdot\K$ and $t\in[0,1]$,
\begin{equation}\label{contraction_eq}
d_H(X*_tY,X'*_tY)\leq \gamma_{1-t}(R)d_H(X,X')
\end{equation}
where
\begin{equation*}
\gamma_{1-t}(R)= \frac{1-e^{-R(1-t)}}{1-e^{-R}}
\end{equation*}
and
\begin{equation*}
R = \diam_{d_H}(\R_{>0}\cdot\K) = \diam_{d_H}(\K) = \sup\{d_H(X,X'): X,X'\in\K\}<\infty
\end{equation*}
since $d_H(cX,c'X') = d_H(X,X')$ for all $c,c'>0$ and $\K$ is compact. We immediately get the following lemma.
\begin{lemma}[Hilbert contractivity]\label{contractive_lemma}
Let $\K\subset\mathbb{S}^n_{++}$ be a compact set. If $X,X'\in\R_{>0}\cdot\K$, $i\in\N$ and $p\in\Z_{\geq 0}$
$$d_H(S_i(X),S_i(X')) \leq \gamma_{\frac{i}{i+1}}(R)d_H(X,X').$$
and so
$$d_H(T_p(X),T_p(X')) \leq \gamma_{\frac{(p+1)k}{(p+1)k+1}}(R)\dots\cdot\gamma_{\frac{pk+1}{pk+2}}(R)d_H(X,X')$$
where $R = \diam_{d_H}(\K)$.
\end{lemma}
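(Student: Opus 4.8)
The plan is to obtain both estimates directly from the single-geodesic Hilbert contraction \cref{contraction_eq}. For the first inequality, recall that $S_i(X) = X*_{\frac{1}{i+1}}Y_j$ where $1\le j\le k$ is determined by $j\equiv i\mod k$. Applying \cref{contraction_eq} with the fixed endpoint $Y = Y_j$ and parameter $t = \frac{1}{i+1}\in(0,1)$ — legitimate since $X,X',Y_j\in\R_{>0}\cdot\K$ and $R = \diam_{d_H}(\K) = \diam_{d_H}(\R_{>0}\cdot\K)$ dominates every Hilbert distance among these points — gives
\begin{equation*}
d_H(S_i(X),S_i(X')) = d_H\!\left(X*_{\frac{1}{i+1}}Y_j,\ X'*_{\frac{1}{i+1}}Y_j\right) \le \gamma_{1-\frac{1}{i+1}}(R)\,d_H(X,X') = \gamma_{\frac{i}{i+1}}(R)\,d_H(X,X'),
\end{equation*}
which is the first claim.

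For the second inequality I would unwind $T_p$ as a composition of $k$ consecutive one-step maps, $T_p = S_{(p+1)k}\circ S_{(p+1)k-1}\circ\cdots\circ S_{pk+1}$. Setting $Z_0 = X$, $Z'_0 = X'$ and $Z_\ell = S_{pk+\ell}(Z_{\ell-1})$, $Z'_\ell = S_{pk+\ell}(Z'_{\ell-1})$ for $1\le\ell\le k$, we have $Z_k = T_p(X)$ and $Z'_k = T_p(X')$, and the first claim applied at level $\ell$ yields $d_H(Z_\ell,Z'_\ell)\le\gamma_{\frac{pk+\ell}{pk+\ell+1}}(R)\,d_H(Z_{\ell-1},Z'_{\ell-1})$. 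Multiplying these $k$ inequalities and telescoping gives
\begin{equation*}
d_H(T_p(X),T_p(X')) \le \Big(\prod_{\ell=1}^{k}\gamma_{\frac{pk+\ell}{pk+\ell+1}}(R)\Big)d_H(X,X') = \gamma_{\frac{(p+1)k}{(p+1)k+1}}(R)\cdots\gamma_{\frac{pk+1}{pk+2}}(R)\,d_H(X,X'),
\end{equation*}
exactly as stated.

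The only point beyond routine composition is verifying that each intermediate pair $Z_\ell,Z'_\ell$ (for $1\le\ell\le k-1$) again lies in $\R_{>0}\cdot\K$, so that \cref{contraction_eq} can be reapplied at the next level, together with $Y_1,\dots,Y_k\in\R_{>0}\cdot\K$; equivalently, one needs $\R_{>0}\cdot\K$ to be invariant under each $S_i$. For the set relevant to \cref{alg:inductive mean}, $\K = \C = \conv\{X_1,Y_1,\dots,Y_k\}$, this was already observed: by \cref{projective_eq} the normalized geodesic point lies in $\conv\{X,Y_j\}\subseteq\C$, and by \cref{scalar_lemma} rescaling contributes only a positive scalar, so $S_i(\R_{>0}\cdot\C)\subseteq\R_{>0}\cdot\C$; hence the iteration stays inside $\R_{>0}\cdot\C$ and the fixed $R = \diam_{d_H}(\C)<\infty$ controls every Hilbert distance that appears. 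I expect no genuine obstacle beyond this bookkeeping; it is worth recording only that $\gamma_{\frac{i}{i+1}}(R)\in(0,1)$ for all $i$ — since $\frac{i}{i+1}<1$ forces $1-e^{-R\frac{i}{i+1}}<1-e^{-R}$ — so that each $S_i$ is in fact a strict Hilbert contraction, which is what makes this lemma the engine of the convergence argument to follow.
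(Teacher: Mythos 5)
Your proof is correct and takes the same route the paper intends: the paper simply declares that the lemma follows ``immediately'' from \cref{contraction_eq}, and your write-up is exactly what ``immediately'' unpacks to, applying the single-step contraction with pivot $Y_j$ and then composing and telescoping over the $k$ steps of $T_p$. The explicit check that $Y_j$ and the intermediate iterates $Z_\ell, Z'_\ell$ must lie in $\R_{>0}\cdot\K$ for \cref{contraction_eq} to be reapplicable — and that $\K=\C$ satisfies this via $S_i(\R_{>0}\cdot\C)\subseteq\R_{>0}\cdot\C$ — is a useful clarification the paper glosses over, since the lemma as stated for an arbitrary compact $\K$ does not formally include that invariance hypothesis.
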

Note that taking the tangent line at $-R$ of the function $x\to e^x$ and using that this function is convex we get $e^x \geq e^{-R}+(x+R)e^{-R}$. So
\begin{equation}\label{cvx_inequality}
\gamma_{1-t}(R) =\frac{1-e^{-R(1-t)}}{1-e^{-R}} \leq 1-\frac{Re^{-R}}{1-e^{-R}}t.
\end{equation}
Now the following observation will turn out to be useful:
\begin{equation}\label{gamma_product}
\prod_{i=1}^\infty\gamma_{\frac{i}{i+1}}(R) = 0.
\end{equation}
This holds because
$$0\leq \prod_{i=1}^\infty\gamma_{\frac{i}{i+1}}(R) \leq \prod_{i=1}^\infty\Big(1-\frac{Re^{-R}}{1-e^{-R}}\frac{1}{i+1}\Big)=0$$
where the second inequality holds by \cref{cvx_inequality} and the last identity holds by \cite[Corollary 2.2.3]{arfken_5_1985} and using the divergence of the harmonic series.
\begin{remark}
\cref{gamma_product} combined with \cref{contractive_lemma} tells us that, given any two initializations for the algorithm, the resulting sequences will come arbitrarily close together in the Hilbert projective metric. However, this is not enough to show convergence in this projective metric. The key to showing this will be to also use \cref{fixed_pt_lemma}, with the help of \cref{hilbert_lemma}.
\end{remark}
\begin{proposition}[Hilbert convergence]\label{proj_conv_thm}
Let $(X_i)_{i\geq 1}$ be any sequence generated by \cref{alg:inductive mean} and $X^*$ denote a point satisfying the conditions of \cref{lim_lemma}. Then, we have
$$d_H(X_{pk+1},X^*) \to 0 \text{ as } p \to \infty.$$
\end{proposition}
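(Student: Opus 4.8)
The plan is to combine the Hilbert-metric contractivity of the maps $T_p$ (\cref{contractive_lemma}) with the near-fixed-point estimate for $X^*$ (\cref{fixed_pt_lemma}), translating the latter into the Hilbert metric via \cref{hilbert_lemma}. First I would fix a compact set $\K\subset\mathbb{S}^n_{++}$ large enough to contain $X^*$ and all the iterates $X_{pk+1}$; since every $X_i$ lies in $\R_{>0}\cdot\C$ (as noted after \cref{projective_eq}) and, by \cref{scalar_lemma}, the scalar factors are controlled, one can choose such a $\K$ — say by normalizing representatives, or by observing the iterates stay in a bounded region of $\R_{>0}\cdot\C$. Let $R=\diam_{d_H}(\K)$ and write $c_p := \gamma_{\frac{(p+1)k}{(p+1)k+1}}(R)\cdots\gamma_{\frac{pk+1}{pk+2}}(R)$ for the contraction factor of $T_p$ from \cref{contractive_lemma}, so that $\prod_{p\geq 0} c_p = 0$ by \cref{gamma_product}.

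The core estimate is a one-step recursion for $\delta_p := d_H(X_{pk+1}, X^*)$. Write $X_{(p+1)k+1} = T_p(X_{pk+1})$ and insert $T_p(X^*)$:
\begin{equation*}
\delta_{p+1} = d_H\big(T_p(X_{pk+1}), X^*\big) \leq d_H\big(T_p(X_{pk+1}), T_p(X^*)\big) + d_H\big(T_p(X^*), X^*\big).
\end{equation*}
The first term is at most $c_p\,\delta_p$ by \cref{contractive_lemma}. For the second term, \cref{fixed_pt_eq} gives $\|T_p(X^*)-X^*\|\leq O/p^2$, and since $T_p(X^*)$ and $X^*$ both lie in a fixed compact set (one containing $\K$ and its geodesic neighbourhoods $\K_1,\dots,\K_k$ as in the proof of \cref{fixed_pt_lemma}), \cref{hilbert_lemma} yields $d_H(T_p(X^*),X^*)\leq CO/p^2 =: A/p^2$. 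Hence
\begin{equation*}
\delta_{p+1} \leq c_p\,\delta_p + \frac{A}{p^2}.
\end{equation*}

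It remains to show that any nonnegative sequence satisfying $\delta_{p+1}\leq c_p\delta_p + A/p^2$ with $c_p\in(0,1)$ and $\prod c_p = 0$ tends to $0$. Unrolling the recursion, $\delta_{p} \leq \big(\prod_{q=1}^{p-1}c_q\big)\delta_1 + A\sum_{q=1}^{p-1}\big(\prod_{r=q+1}^{p-1}c_r\big)\frac{1}{q^2}$. The first summand vanishes since $\prod c_q\to 0$. For the tail sum, I would split at an index $p/2$: the terms with $q\geq p/2$ contribute at most $A\sum_{q\geq p/2}q^{-2}\to 0$, while the terms with $q< p/2$ carry the prefactor $\prod_{r=q+1}^{p-1}c_r \leq \prod_{r=p/2}^{p-1}c_r$, which tends to $0$ uniformly in such $q$ (again because $\prod c_r=0$, and in fact \cref{cvx_inequality} gives $c_r\leq (1-\frac{const}{rk+\cdot})^k$ so the partial products over $[p/2,p]$ decay), while $A\sum_{q<p/2}q^{-2}\leq A\pi^2/6$ stays bounded. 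Both pieces go to $0$, so $\delta_p\to 0$.

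The main obstacle I anticipate is the bookkeeping in the last step — making the split-the-sum argument clean, in particular verifying that the tail partial products $\prod_{r=p/2}^{p-1}c_r$ genuinely go to zero (equivalently that $\sum_r(1-c_r)$ diverges on every such window), which follows from \cref{cvx_inequality} and divergence of the harmonic series but needs to be stated carefully. A secondary technical point is confirming that a single compact $\K$ can be chosen to contain all iterates $X_{pk+1}$ together with $X^*$ and the auxiliary sets $\K_1,\dots,\K_k$, so that both \cref{fixed_pt_lemma} and \cref{hilbert_lemma} apply with constants uniform in $p$; this uses the containment $X_i\in\R_{>0}\cdot\C$, the homogeneity relations of \cref{scalar_lemma}, and the projective invariance $d_H(cX,c'X')=d_H(X,X')$, so that everything may be carried out with normalized representatives in the compact set $\C$.
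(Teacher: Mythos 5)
Your decomposition is the paper's exactly: insert $T_p(X^*)$ via the triangle inequality, bound the first term by Hilbert contractivity (\cref{contractive_lemma}), bound the second by \cref{hilbert_lemma} applied to \cref{fixed_pt_eq}, and unroll the resulting recursion $\delta_{p+1}\leq c_p\delta_p + A/p^2$. The compact-set bookkeeping you sketch is also essentially what the paper does (applying \cref{contractive_lemma} with $\K=\C$ and \cref{hilbert_lemma} with the compact set of truncated geodesic images of $X^*$). Where you diverge is the final limit argument, and there is a genuine error.

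You split the unrolled sum at threshold $p/2$ and claim the prefactor $\prod_{r=\lceil p/2\rceil}^{p-1}c_r$ tends to $0$ as $p\to\infty$. That is false. By \cref{cvx_inequality}, $1-c_r$ is of order $1/r$, so
\begin{equation*}
-\log\prod_{r=\lceil p/2\rceil}^{p-1}c_r \;\asymp\; \sum_{r=\lceil p/2\rceil}^{p-1}\frac{1}{r} \;\longrightarrow\; \log 2,
\end{equation*}
which means this sliding-window partial product converges to a positive constant strictly less than $1$, not to $0$. The fact that the full product $\prod_{r=1}^{\infty}c_r=0$ (\cref{gamma_product}) does not imply vanishing of partial products over windows of the form $[p/2,p]$; a standard counterexample is $c_r=1-1/r$, where $\prod_{r=1}^{N}c_r=1/N\to 0$ but $\prod_{r=m}^{2m}c_r\to 1/2$. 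The criterion you yourself flag as needing care --- that $\sum_r(1-c_r)$ diverges on every such window --- is precisely what fails here, since the harmonic sum over $[p/2,p]$ is bounded. Consequently the $q<p/2$ piece of your split is only bounded by (constant)$\cdot A\pi^2/6$, not by something tending to $0$.

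The argument is salvageable by choosing a slower threshold $N(p)$ with $N(p)\to\infty$ and $N(p)/p\to 0$ (e.g.\ $N(p)=\lfloor\sqrt p\rfloor$ or $\lfloor\log p\rfloor$): then $\sum_{q\geq N(p)}1/q^2\to 0$ and $\sum_{r=N(p)}^{p}(1-c_r)\asymp\log(p/N(p))\to\infty$, so $\prod_{r=N(p)}^{p-1}c_r\to 0$. Alternatively, and this is what the paper does, simply bound the finite sum by the infinite sum $\sum_{q=1}^{\infty}\bigl(\prod_{i=qk+1}^{(p-1)k}\gamma_{i/(i+1)}(R)\bigr)D/q^2$ and invoke dominated convergence: for each fixed $q$ the prefactor tends to $0$ as $p\to\infty$ by \cref{gamma_product}, and each term is dominated by $D/q^2$ with $\sum D/q^2<\infty$. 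That avoids having to tune a split point at all.
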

\begin{proof}
For $p\in\Z_{\geq 0}$ we have
\begin{equation}\label{sol_ineq}
\begin{aligned}
d_H(X_{(p+1)k+1},X^*) &\leq d_H(X_{(p+1)k+1},T_p(X^*))+d_H(T_p(X^*),X^*) \\
&\leq \gamma_{\frac{(p+1)k}{(p+1)k+1}}(R)\dots\cdot\gamma_{\frac{pk+1}{pk+2}}(R)d_H(X_{pk+1},X^*) + \frac{D}{p^2}
\end{aligned}
\end{equation}
for some $D>0$, where we used  \cref{contractive_lemma} for the first term and \cref{hilbert_lemma} followed by (\ref{fixed_pt_eq}) from \cref{fixed_pt_lemma} for the second term.  \cref{contractive_lemma} was applied with $\K =\C$ and \cref{hilbert_lemma} was applied with
\begin{equation*}
\K = \{(\dots(X^**_{t_1}Y_1)*_{t_2}\dots )*_{t_k}Y_k: t_1,\dots,t_k\in[0,1]\}
\end{equation*}
which is compact by continuity of the $\varphi^j_X$ and $\psi^j_X$ in $X$ (\cref{m_lipschitz}). 

Now applying \cref{sol_ineq} recursively we get
\begin{equation*}
\begin{aligned}
d_H(X_{pk+1},X^*) &\leq \bigg(\prod_{i=k+1}^{pk}\gamma_{\frac{i}{i+1}}(R)\bigg)d_H(X_{k+1},X^*)+\sum_{q=1}^{p-1}\bigg(\prod_{i=qk+1}^{(p-1)k}\gamma_{\frac{i}{i+1}}(R)\bigg)\frac{D}{q^2} \\
&\leq \bigg(\prod_{i=k+1}^{pk}\gamma_{\frac{i}{i+1}}(R)\bigg)d_H(X_{k+1},X^*)+\sum_{q=1}^{\infty}\bigg(\prod_{i=qk+1}^{(p-1)k}\gamma_{\frac{i}{i+1}}(R)\bigg)\frac{D}{q^2} \\
&\to 0 \text{ as } p \to \infty
\end{aligned}
\end{equation*}
using \cref{gamma_product} and that $\sum_{q=1}^{\infty}\frac{D}{q^2}<\infty$, where we interpret $\prod_{i=l}^m a_i$ as 1 when $m<l$.
\end{proof}

\subsection{Convergence}\label{cvgence_section}

Let
\begin{equation*}
\S = \{X\in\mathbb{S}^n_{++}: \|X\|=\|X^*\|\}.    
\end{equation*}
So $X^*\in\S$. Moreover, if $X\in\mathbb{S}^n_{++}$ there is a unique $c>0$ such that $c^{-1}X\in\S$. So we have the natural identification
\begin{equation*}
\S\times\R_{>0}\xrightarrow{\cong} \mathbb{S}^n_{++}\text{, } 
\quad
(\hat X,c) \mapsto c\hat X.
\end{equation*}
Write $(\hat X_i,c_i)_{i\geq 1}\subset\S\times\R_{>0}$ for the sequence corresponding to $(X_i)_{i\geq1}\subset\mathbb{S}^n_{++}$. By \cref{proj_conv_thm}, $(X_{pk+1})_{p\geq 0}$ tends to $X^*$ in the Hilbert projective metric, hence so does $(\hat X_{pk+1})_{p\geq 0}$. The Hilbert projective metric is a metric in the proper sense on the set $\S$ \cite[Proposition 2.1.1]{Lemmens2012}. Moreover, the topology it generates is the Euclidean topology \cite[Proposition 1.1]{Nussbaum1994}. Hence $(\hat X_{pk+1})_{p\geq 0}$ actually converges to $X^*$ in the Euclidean topology, and so in the Euclidean norm. Now note that $X_{pk+1}= c_{pk+1}\hat X_{pk+1}$ for all $p\in\Z_{\geq 0}$. So we need to show $(c_{pk+1})_{p\geq 0}$ converges. 

We will slightly abuse the notation and view $T_p$ as a map from $\S\times\R_{> 0}$ to itself. With this in mind, for $\hat X\in \S$ write $b_{\hat X,p}$ for the positive number corresponding to the second coordinate of $T_p(\hat X,1)$. We will need to analyse these.
\begin{lemma}\label{bbound_lemma}
    Let $\K\subset\S$ be a compact set with $X^*\in\K$. Then there is $I,L>0$ such that for all $\hat X\in \K$ and $p\in\Z_{\geq 0}$ 
    \begin{equation*}
        \|b_{\hat X,p}-1\| \leq \frac{L}{p}\|\hat X-X^*\|+\frac{I}{p^2}.
    \end{equation*}
\end{lemma}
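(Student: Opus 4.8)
The plan is to reduce the claim directly to the displacement estimate \cref{expansion_eq} of \cref{fixed_pt_lemma}, using crucially that $\hat X$ and $X^*$ both lie on the sphere $\S=\{X:\|X\|=\|X^*\|\}$. First I would unwind the definition of $b_{\hat X,p}$. Under the identification $\S\times\R_{>0}\cong\mathbb{S}^n_{++}$, the pair $(\hat X,1)$ corresponds to $\hat X\in\mathbb{S}^n_{++}$, so $T_p(\hat X,1)$ corresponds to $T_p(\hat X)$; writing $T_p(\hat X)=b_{\hat X,p}\,\widehat{T_p(\hat X)}$ with $\|\widehat{T_p(\hat X)}\|=\|X^*\|$ forces
\[
b_{\hat X,p}=\frac{\|T_p(\hat X)\|}{\|X^*\|}.
\]
Since $\hat X\in\S$ gives $\|\hat X\|=\|X^*\|$, this rearranges to $b_{\hat X,p}-1=\bigl(\|T_p(\hat X)\|-\|\hat X\|\bigr)/\|X^*\|$.

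Next I would apply the reverse triangle inequality $\bigl|\,\|T_p(\hat X)\|-\|\hat X\|\,\bigr|\le\|T_p(\hat X)-\hat X\|$ and then invoke \cref{fixed_pt_lemma} with $\K\subset\S\subset\mathbb{S}^n_{++}$ (compact, containing $X^*$ by hypothesis): there are $K,O>0$, independent of $\hat X\in\K$ and of $p$, with $\|T_p(\hat X)-\hat X\|\le\frac{K}{p}\|\hat X-X^*\|+\frac{O}{p^2}$. Dividing by $\|X^*\|$ gives
\[
\|b_{\hat X,p}-1\|\le\frac{K/\|X^*\|}{p}\,\|\hat X-X^*\|+\frac{O/\|X^*\|}{p^2},
\]
so the lemma holds with $L=K/\|X^*\|$ and $I=O/\|X^*\|$ (the bound being vacuous when $p=0$, as in \cref{fixed_pt_lemma}).

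There is no real obstacle here — the substance lies entirely in the already-established \cref{fixed_pt_lemma}, and what remains is bookkeeping. The only subtlety worth flagging is that one must route the argument through the sharp displacement bound \cref{expansion_eq}, with its decaying $O(1/p)$ coefficient, rather than through the cruder \cref{bound}; the fact that $\hat X,X^*\in\S$ share the same norm is exactly what makes the radial discrepancy $b_{\hat X,p}-1$ controlled by $\|T_p(\hat X)-\hat X\|$, so that \cref{expansion_eq} applies verbatim. I would add that this lemma is the radial analogue of \cref{fixed_pt_lemma}: together with \cref{proj_conv_thm}, which handles the projective component of the sequence, it will yield convergence of the scalars $c_{pk+1}$ and hence Euclidean convergence of $(X_i)$ to $X^*$.
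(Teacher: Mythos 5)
Your proof is correct, and it takes a genuinely simpler route than the paper's, which reaches the same estimate via a planar Euclidean geometry argument. The paper writes $T_p(\hat X,1)=(\hat X',b_{\hat X,p})$ and works in the plane through $0$, $\hat X$, $\hat X'$: with $x=\|\hat X'-b_{\hat X,p}\hat X'\|$, $y=\|b_{\hat X,p}\hat X'-\hat X\|=\|T_p(\hat X)-\hat X\|$ and $z=\|\hat X-\hat X'\|$, it bounds $x\le y+z$ by the triangle inequality, then controls $z$ via the perpendicular drop $h\le y$ and the isoceles half-angle identity $z=h/\cos(\alpha/2)$, arriving at $x\le\bigl(1+\tfrac{1}{\cos(\alpha/2)}\bigr)y$ with $\alpha\to 0$ uniformly, and finally divides by $\|\hat X'\|=\|X^*\|$. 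Your argument short-circuits all of that: since $\hat X$ and $\hat X'$ both lie on $\S$ and so share the norm $\|X^*\|$, the radial displacement $x=|b_{\hat X,p}-1|\,\|X^*\|$ is exactly $\bigl|\,\|T_p(\hat X)\|-\|\hat X\|\,\bigr|$, which the reverse triangle inequality bounds by $y$ directly. Both proofs then close by feeding $y$ into \cref{expansion_eq} of \cref{fixed_pt_lemma} and dividing by $\|X^*\|$. Your version eliminates the angular estimate entirely, dispenses with the uniform smallness of $\alpha$, and even gives a slightly sharper implicit constant ($x\le y$ rather than $x\le(1+1/\cos(\alpha/2))\,y$), so it is a clean improvement over the argument in the text.
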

\begin{proof}
This is an exercise in Euclidean geometry using \cref{fixed_pt_lemma}. Let $\hat X\in\K$ and $p\in\Z_{\geq 0}$. Write $T_p(\hat X,1) = (\hat X',b_{\hat X,p})$. Then let $x$, $y$ and $z$ be the lengths of the segments $\hat X'$ to $b_{\hat X,p}\hat X'$, $b_{\hat X,p}\hat X'$ to $\hat X$ and $\hat X$ to $\hat X'$ respectively. Furthermore, let $h$ be the distance from $\hat X$ to the line through $0$ and $\hat X'$, and $\alpha$ be the angle between this line and the line through $0$ and $\hat X$ (see \cref{Euclidean_fig}).
Then
\begin{equation}\label{eq1}
y \leq \frac{K}{p}\|\hat X-X^*\|+\frac{O}{p^2}
\end{equation}
for some $O,K>0$ independent of $\hat X\in \K$ by \cref{expansion_eq} from \cref{fixed_pt_lemma}. So
\begin{equation}\label{eq2}
\alpha = \arcsin\frac{h}{\|\hat X\|}\leq \arcsin\frac{y}{\|\hat X\|} \to 0 \text{ as } p\to\infty
\end{equation}
uniformly for $\hat X \in \S$. Now
\begin{equation}\label{eq3}
z = \frac{h}{\cos(\alpha/2)}\leq \frac{y}{\cos(\alpha/2)}.
\end{equation}
So
\begin{equation*}
x\leq y+z \leq \Big(1+\frac{1}{\cos(\alpha/2)}\Big)y \leq \frac{K'}{p}\|\hat X-X^*\|+\frac{O'}{p^2}    
\end{equation*}
by \cref{eq3}, \cref{eq2} and \cref{eq1} for some $O',K'>0$ independent of $\hat X\in\K$. Dividing by $\|\hat X'\|= \|X^*\|$, we get the lemma.
\end{proof}
\begin{figure}[H]
\centerline{\includegraphics[scale=.25]{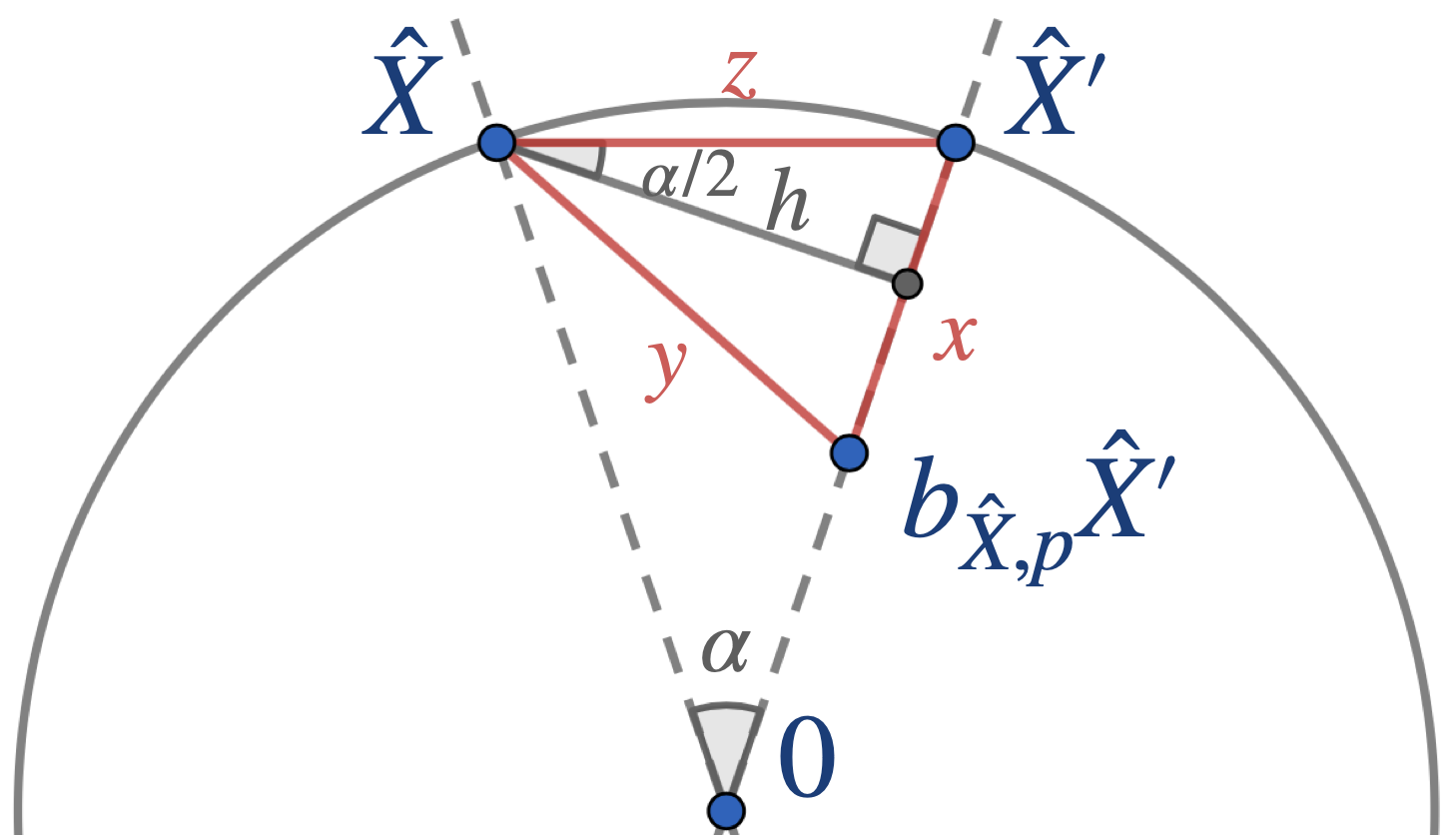}}
\caption{Sketch of the geometry involved in the proof of \cref{bbound_lemma}, Informally, we are trying to show $x$ is small. We know from \cref{fixed_pt_lemma} that $y$ is small, and we deduce that $z$ is small and thus that $x$ must be small.}
\label{Euclidean_fig} 
\end{figure}

Now we have all the necessary ingredients to prove convergence of $(c_{pk+1})_{p\geq 0}$.
\begin{proposition}[Radial convergence]\label{rad_conv_thm}
Let $(X_i)_{i\geq 1}$ be any sequence generated by \cref{alg:inductive mean} and $(c_i)_{i\geq 1}$ be the corresponding sequence in $\mathbb{R}_{>0}$ defined at the beginning of \cref{cvgence_section}.
Then, we have   
\begin{equation*}
 c_{pk+1} \to 1 \text{ as } p\to\infty.    
\end{equation*}
\end{proposition}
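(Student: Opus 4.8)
The plan is to reduce the statement to an elementary analysis of a scalar linear recursion for the radial coordinates $c_{pk+1}$, driven by the two facts already established: that $\hat X_{pk+1}\to X^*$ in Euclidean norm, and that the radial action of $T_p$ is close to the identity (\cref{bbound_lemma}). First I would combine $X_{(p+1)k+1}=T_p(X_{pk+1})$ with the decomposition $X_{pk+1}=c_{pk+1}\hat X_{pk+1}$ and the homogeneity $T_p(cX)=c^{\frac{pk+1}{(p+1)k+1}}T_p(X)$ from \cref{scalar_lemma}. Since by definition of $b_{\hat X,p}$ we have $T_p(\hat X_{pk+1})=b_{\hat X_{pk+1},p}\,\hat X_{(p+1)k+1}$ with $\hat X_{(p+1)k+1}\in\S$, reading off the radial component gives
\[
c_{(p+1)k+1}=c_{pk+1}^{\,\theta_p}\,b_{\hat X_{pk+1},p},\qquad \theta_p:=\frac{pk+1}{(p+1)k+1}\in(0,1).
\]
Setting $a_p:=\log c_{pk+1}$ and $r_p:=\log b_{\hat X_{pk+1},p}$, this becomes the linear recursion $a_{p+1}=\theta_p a_p+r_p$, and the goal is to show $a_p\to 0$.

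Next I would control the forcing term $r_p$. As noted just before the statement, \cref{proj_conv_thm} together with the fact that the Hilbert projective metric induces the Euclidean topology on $\S$ gives $\hat X_{pk+1}\to X^*$ in Frobenius norm; hence $\K:=\{\hat X_{pk+1}:p\ge 0\}\cup\{X^*\}$ is a compact subset of $\S$ containing $X^*$. Applying \cref{bbound_lemma} on this $\K$ yields constants $I,L>0$ with $\bigl|b_{\hat X_{pk+1},p}-1\bigr|\le \tfrac{L}{p}\varepsilon_p+\tfrac{I}{p^2}$, where $\varepsilon_p:=\|\hat X_{pk+1}-X^*\|\to 0$. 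In particular $b_{\hat X_{pk+1},p}\to 1$, so for all large $p$, $|r_p|\le 2\bigl|b_{\hat X_{pk+1},p}-1\bigr|\le \eta_p/p$ where $\eta_p:=2L\varepsilon_p+2I/p\to 0$.

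Finally I would solve the recursion. The key observation is that the contraction factors telescope, $\prod_{p=m}^{P-1}\theta_p=\frac{mk+1}{Pk+1}$, so unrolling $a_{p+1}=\theta_p a_p+r_p$ from a fixed large index $p_0$ gives
\[
a_P=\frac{p_0k+1}{Pk+1}\,a_{p_0}+\frac{1}{Pk+1}\sum_{q=p_0}^{P-1}\bigl((q+1)k+1\bigr)\,r_q .
\]
The first term tends to $0$ as $P\to\infty$. For the second, since $\tfrac{(q+1)k+1}{q}\le 2k+1$ we get $\bigl((q+1)k+1\bigr)|r_q|\le (2k+1)\eta_q$ with $\eta_q\to 0$, hence $\sum_{q=p_0}^{P-1}\bigl((q+1)k+1\bigr)|r_q|=o(P)$ by a Cesàro argument, and the second term also tends to $0$. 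Therefore $a_P\to 0$, i.e.\ $c_{pk+1}=e^{a_p}\to 1$.

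The substantive input is \cref{bbound_lemma} (already established via the Euclidean geometry sketched in \cref{Euclidean_fig}); the only remaining delicate point is the recursion estimate. There the thing to notice is that even though $\theta_p\to 1$ with $1-\theta_p\asymp 1/p$ non-summable, the telescoping product still drives the influence of the initial condition $a_{p_0}$ to $0$, while the weights $(q+1)k+1$ exactly offset the $1/p$-decay of $r_q$, leaving the Cesàro average of a null sequence. I do not anticipate any obstacle beyond this bookkeeping.
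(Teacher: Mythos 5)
Your proposal is correct and follows essentially the same approach as the paper: you derive the same recursion $c_{(p+1)k+1}=c_{pk+1}^{(pk+1)/((p+1)k+1)}b_{\hat X_{pk+1},p}$ from \cref{scalar_lemma}, feed in the bound on $b_{\hat X_{pk+1},p}$ from \cref{bbound_lemma}, and unroll using the telescoping exponents $\frac{(q+1)k+1}{pk+1}$. The only distinction is cosmetic: you pass to logarithms and close with a Ces\`aro average of the null sequence $\eta_q$, whereas the paper stays multiplicative and closes with the elementary bounds $(1\pm\epsilon/q)^q\lessgtr\exp(\pm 2\epsilon)$ together with an $\epsilon$--$r$ argument; the two finishes are equivalent.
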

\begin{proof}
\cref{scalar_lemma} says that if $\hat X,\hat X'\in\S$ and $a,a'>0$ are such that $T_p(\hat X,a) = (\hat X', a')$, then for $c>0$,
\begin{equation}\label{scalar_transf_eq}
T_p(\hat X,ca) = (\hat X', c^{\frac{pk+1}{(p+1)k+1}}a').
\end{equation}
Then by (\ref{scalar_transf_eq})
\begin{equation}\label{recurrence_for_b}
c_{(p+1)k+1} = c_{pk+1}^{\frac{pk+1}{(p+1)k+1}}b_{\hat X_{pk+1},p}.
\end{equation}
Now since $\hat X_{pk+1} \to X^* \text{ as } p\to\infty$, \cref{bbound_lemma} applied to $\K =(\R_{>0}\cdot\C)\cap\S$ implies that for $\epsilon>0$ there is $r\in\Z_{\geq 0}$ such that
\begin{equation}\label{b_bound}
|b_{\hat X_{qk+1},q}-1|\leq \frac{\epsilon}{q}
\end{equation}
for all $q\geq r$. Also note that for $x\in\R$, $(1+x/q)^q \to e^x$ as $q \to \infty$. Thus, possibly by increasing $r$, we have
\begin{equation}\label{upper_bound}
\Big(1+\frac{\epsilon}{q}\Big)^q \leq \exp(2\epsilon)
\end{equation}
and
\begin{equation}\label{lower_bound}
\Big(1-\frac{\epsilon}{q}\Big)^q \geq \exp(-2\epsilon)
\end{equation}
for all $q\geq r$. Now applying \cref{recurrence_for_b} recursively we get for $p>r$
\begin{equation*}
c_{pk+1} = c_{rk+1}^{\frac{rk+1}{pk+1}}\prod_{q=r}^{p-1} b_{\hat X_{qk+1},q}^{\frac{(q+1)k+1}{pk+1}}    
\end{equation*}
so using \cref{b_bound} followed by \cref{upper_bound}
\begin{equation}\label{upper_bound2}
\begin{aligned}
c_{pk+1} &\leq c_{rk+1}^{\frac{rk+1}{pk+1}}\prod_{q=r}^{p-1}\Big(1+\frac{\epsilon}{q}\Big)^{\frac{(q+1)k+1}{pk+1}}\\
&\leq c_{rk+1}^{\frac{rk+1}{pk+1}}\prod_{q=r}^{p-1}\exp(2\epsilon)^{\frac{k+(k+1)/q}{pk+1}} \\
&= c_{rk+1}^{\frac{rk+1}{pk+1}}\exp(2\epsilon)^{\sum_{q=r}^{p-1}\frac{k+(k+1)/r}{pk+1}} \\
&\leq c_{rk+1}^{\frac{rk+1}{pk+1}}\exp(2\epsilon)^{1+2/r} 
\to \exp(2\epsilon)^{1+2/r} \text{ as } p\to\infty.
\end{aligned}
\end{equation}
We can similarly use \cref{b_bound} followed by \cref{lower_bound} to get
\begin{equation}\label{lower_bound2}
c_{pk+1}\geq c_{rk+1}^{\frac{rk+1}{pk+1}}\exp(-2\epsilon)^{1+2/r} \to \exp(-2\epsilon)^{1+2/r} \text{ as } p\to\infty.
\end{equation}
Since $\epsilon>0$ is arbitrary and $r\in\Z_{\geq 0}$ is arbitrarily large, we deduce from \cref{upper_bound2} and \cref{lower_bound2} that
$c_{pk+1} \to 1 \text{ as } p\to\infty$.    
\end{proof}
Finally, we are in a position to state and prove our main theorem.
\begin{theorem}[Convergence]\label{cvgence_thm} Let $(X_i)_{i\geq 1}$ denote any sequence generated by \cref{alg:inductive mean}. We have
    \begin{equation*}
    X_i \to X^* \text{ as } i\to\infty,
    \end{equation*}
    where $X^*$ is independent of the choice of initialization $X_1$. Moreover, $X^*$ is the unique solution in $\mathbb{S}^n_{++}$ to the equation
    \begin{equation}\label{limit_eq2}
    m^k_{X^*}Y_k+\dots+m^1_{X^*}Y_1+\sum_{j=1}^k o^j_{X^*}X^*=0,
    \end{equation}
    where $m^j_X = \frac{d\varphi^j_X}{dt}(0)$, $o^j_X = \frac{d\psi^j_X}{dt}(0)$, $\varphi^j_X =\varphi_{\alpha\beta}$, $\psi^j_X =\psi_{\alpha\beta}$, $\alpha = \lambda_{\min}(Y_jX^{-1})$, and $\beta = \lambda_{\max}(Y_jX^{-1})$.
\end{theorem}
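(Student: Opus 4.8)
The plan is to assemble the two convergence statements already established: \cref{proj_conv_thm} controls the iterates up to a positive scalar (in Hilbert's projective metric), \cref{rad_conv_thm} controls that scalar, and together they yield Euclidean convergence of the subsequence $(X_{pk+1})_{p\geq 0}$ to $X^*$; a short argument then propagates this to the full sequence, and uniqueness together with independence of the initialization follow formally.

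First I would assemble the subsequence statement. Recall the factorization $X_i=c_i\hat X_i$ with $\hat X_i\in\S$ and $c_i>0$ introduced at the start of \cref{cvgence_section}. By \cref{proj_conv_thm}, $\hat X_{pk+1}\to X^*$ in Hilbert's projective metric; since $d_H$ restricts to a genuine metric on $\S$ whose topology is the Euclidean one, this convergence is Euclidean. Combined with $c_{pk+1}\to 1$ from \cref{rad_conv_thm} and continuity of scalar multiplication, this gives $X_{pk+1}=c_{pk+1}\hat X_{pk+1}\to X^*$ in Euclidean norm.

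Next I would pass from this subsequence to the whole sequence. Put $\K=\{X_{pk+1}:p\geq 0\}\cup\{X^*\}$, which is compact by the previous step and contains $X^*$. Exactly as in the proof of \cref{fixed_pt_lemma}, the sets $\K_j=\{X*_tY_j:X\in\K_{j-1},\,t\in[0,1]\}$ with $\K_0=\K$ are compact subsets of $\mathbb{S}^n_{++}$, and every iterate $X_i$ lies in $\tilde\K:=\K\cup\K_1\cup\dots\cup\K_k$, which is compact. By the first-order Taylor expansion of $S_i$ (as in \cref{taylor1_eq}) together with the uniform bounds on $m^j_X$ and $o^j_X$ over compact sets (\cref{m_lipschitz}), there is $M>0$ with $\|S_i(X)-X\|\leq M/i$ for all $X\in\tilde\K$ and $i\in\N$. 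Hence for $pk+1\leq i\leq(p+1)k+1$ we have $\|X_i-X_{pk+1}\|\leq\sum_{l=0}^{k-1}\|S_{pk+1+l}(X_{pk+1+l})-X_{pk+1+l}\|\leq M/p$, which tends to $0$ as $p\to\infty$. Together with $X_{pk+1}\to X^*$ this gives $X_i\to X^*$.

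Finally, uniqueness and independence of $X_1$ are immediate. \cref{proj_conv_thm} and \cref{rad_conv_thm}, and hence the convergence just proved, are valid for \emph{any} point $X^*$ satisfying the conclusion of \cref{lim_lemma}---that is, any solution of \cref{limit_eq}, which is the same as \cref{limit_eq2}---and for \emph{any} sequence produced by \cref{alg:inductive mean}, whatever its initialization. So if $X^*_1$ and $X^*_2$ both solve \cref{limit_eq2}, a single generated sequence converges to each, forcing $X^*_1=X^*_2$; with existence supplied by \cref{lim_lemma} this makes \cref{limit_eq2} uniquely solvable, and every sequence from every initialization converges to that unique point. The only step needing real care is the compactness bookkeeping in the third paragraph---making sure the iterates between the checkpoints $X_{pk+1}$ and $X_{(p+1)k+1}$ cannot leave a fixed compact set before the uniform step-size estimate can be applied---but this is handled verbatim by the geodesic-hull construction of \cref{fixed_pt_lemma}; the substantive analytic work is already contained in \cref{proj_conv_thm} and \cref{rad_conv_thm}.
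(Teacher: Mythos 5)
Your proof is correct and its overall architecture matches the paper's: assemble Hilbert (\cref{proj_conv_thm}) and radial (\cref{rad_conv_thm}) convergence to get $X_{pk+1}\to X^*$ in the Euclidean norm, then handle the intermediate iterates, then derive uniqueness and initialization-independence from the fact that a sequence has at most one limit.

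Where you genuinely diverge from the paper is in passing from the checkpoint subsequence $(X_{pk+1})_p$ to the whole sequence. The paper simply asserts that ``with the same proof'' each offset subsequence $(X_{pk+j})_p$ converges to $X^*$; this implicitly requires re-running \cref{lim_lemma} and the Hilbert/radial machinery with a cyclically permuted ordering of the $Y_j$ and then arguing that the resulting fixed point coincides with the original $X^*$. Your replacement is a direct quantitative estimate: every iterate lives in a fixed compact set $\tilde\K$ built from the geodesic hulls, so by the first-order expansion \cref{taylor1_eq} and the uniform bounds of \cref{m_lipschitz} each step satisfies $\|S_i(X)-X\|\le M/i$, whence $\|X_i-X_{pk+1}\|=O(1/p)$ over each block. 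This is more elementary and self-contained than the paper's route, and it avoids having to reconcile fixed points coming from permuted instances of \cref{lim_lemma}. Both approaches quietly use that the convergence argument extends to any $X^*\in\mathbb{S}^n_{++}$ solving \cref{limit_eq2}, not only those produced in $\R_{>0}\cdot\C$ by \cref{lim_lemma}; the paper flags this as an easy extension and you leave it similarly implicit, so on that point you are no worse off than the original.
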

\begin{proof}
We have shown with projective convergence (\cref{proj_conv_thm}) and radial convergence (\cref{rad_conv_thm}) that
\begin{equation*}
    X_{pk+1} = c_{pk+1}\hat X_{pk+1} \to X^* \text{ as } p\to\infty.
\end{equation*}
With the same proof we can show that for $1\leq j\leq k$, $(X_{pk+j})_{p\geq0}$ converges, and it does so to the same point $X^*$. So
$$X_i\to X^* \text{ as } i\to\infty.$$
To be precise, we have shown $X_i \to X^*$ for any $X^*\in \R_{>0}\cdot\C$ satisfying \cref{limit_eq2}. The proof can easily be extended to any $X^*\in\mathbb{S}^n_{++}$. Thus, by uniqueness of limits, the solution to \cref{limit_eq2} must be unique. Moreover we see that $X^*$ is independent of the choice of initialization $X_1$ by the symmetries of \cref{limit_eq2}, or alternatively by construction of $X^*$ in the proof of \cref{lim_lemma}.
\end{proof}

\begin{remark}
    \cref{cvgence_thm} holds more generally than just for the cone $\mathbb{S}^n_{++}$: given a closed convex cone in a finite dimensional real vector space $V$, take $\P= (y_1,\dots,y_k)$ in its interior. The cone is then almost Archimedean so we can still apply \cite[Theorem 2]{Nussbaum2004} in Section \ref{hilbert_section}, and it is also normal (\cite[Lemma 1.2.5]{Lemmens2012}) so we can also apply \cite[Proposition 1.1]{Nussbaum1994} in \cref{cvgence_section}. The only other parts of the proof that need generalizing are the proofs of \cref{m_lipschitz} and \cref{hilbert_lemma}. For this, we only need the additional assumption that, in the interior of this cone, the map $(x,y)\mapsto M(y/x)$
    is locally Lipschitz with respect to a norm on $V$. This is for example the case for the cone
    $\R^n_{+} = \{(x_i)_{i=1}^n:x_i\geq0,\;1\leq i\leq n\}$
    where
    $M(y/x) = \max\{y_i/x_i\}_{i=1}^n$.
\end{remark}

\subsection{Properties of the limit}\label{properties_section}

Since $X^*$ only depends on the $Y_j$ and not on $X_1$ we can write $M(Y_1,\dots,Y_k)= X^*$ and study how $M$ varies in terms of its arguments. We will see that $M$ satisfies a number of nice properties, and thus can be viewed as a mean of the $Y_j$. Some of these properties can be proved in several different ways. However, in a lot of cases, the characterisation of $M$ as being the unique solution to \cref{limit_eq2} grants us with some elegant proofs.
\begin{theorem}[Properties]\label{properties_thm}
Let $Y_1,\dots,Y_k \in \mathbb{S}^n_{++}$. 
\begin{enumerate}
\item For any $0\leq l\leq k$, 
\begin{equation*}
M(\underbrace{Y_1,\dots,Y_1}_{k-l},\underbrace{Y_2,\dots,Y_2}_{l}) = Y_1*_\frac{l}{k}Y_2,
\end{equation*}
and in particular $M(Y_1,Y_2) = Y_1*_\frac{1}{2}Y_2$.
\item Permutation invariance: 
\begin{equation*}
 M(Y_{\sigma(1)},\dots Y_{\sigma(k)}) = M(Y_1,\dots,Y_k) 
\end{equation*} for any permutation of $k$ elements $\sigma$.
\item Affine-equivariance: 
\begin{equation*}
 M(AY_1A^T,\dots, AY_kA^T) = AM(Y_1,\dots,Y_k)A^T   
\end{equation*} 
for any invertible matrix $A$.
\item Joint homogeneity: 
\begin{equation*}
M(c_1Y_1,\dots,c_kY_k) = (c_1\dots\cdot c_k)^{1/k}M(Y_1,\dots,Y_k)     
\end{equation*}
for any $c_1,\dots,c_k>0$.
\item The map $M: (\mathbb{S}^n_{++})^k\to\mathbb{S}^n_{++}$ is continuous.
\end{enumerate}
\end{theorem}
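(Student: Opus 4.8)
The plan is to get parts (ii)--(iv) quickly from the characterization of $M(Y_1,\dots,Y_k)$ as the \emph{unique} solution $X^*$ of \cref{limit_eq2} (\cref{cvgence_thm}), to prove (i) by a direct verification, and to handle (v) by a compactness argument.

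\textbf{Parts (ii)--(iv).} For \emph{permutation invariance}, observe that the left-hand side of \cref{limit_eq2} is a symmetric function of $Y_1,\dots,Y_k$: each term $m^j_XY_j$ and each scalar $o^j_X$ depends only on $X$ and on $Y_j$, through the extreme eigenvalues of $Y_jX^{-1}$; relabelling the $Y_j$ merely permutes the summands, so the equation, and hence its unique solution, is unchanged. For \emph{affine-equivariance}, note that for invertible $A$ the matrix $(AY_jA^T)(AX^*A^T)^{-1}=AY_jX^{-1}A^{-1}$ has the same eigenvalues as $Y_jX^{-1}$, so the coefficients $m^j,o^j$ attached to the tuple $(AY_jA^T)$ at the point $AX^*A^T$ coincide with those attached to $(Y_j)$ at $X^*$; substituting into \cref{limit_eq2} for the transformed tuple yields $A\big(\sum_j m^j_{X^*}Y_j+\sum_j o^j_{X^*}X^*\big)A^T=0$, and uniqueness gives $M(AY_jA^T)=AX^*A^T$. (Alternatively, conjugate a sequence from \cref{alg:inductive mean} and invoke \cref{affine_invariant_prop}.) For \emph{joint homogeneity}, put $c=(c_1\cdots c_k)^{1/k}$ and try $Z=cX^*$: since $(c_jY_j)Z^{-1}=(c_j/c)Y_jX^{-1}$, the eigenvalues feeding the $j$-th coefficient scale by $d_j:=c_j/c$, and writing $m_{\alpha\beta}=\tfrac{d\varphi_{\alpha\beta}}{dt}(0)$, $o_{\alpha\beta}=\tfrac{d\psi_{\alpha\beta}}{dt}(0)$, \cref{derivatives} gives $m_{d_j\alpha,d_j\beta}=d_j^{-1}m_{\alpha\beta}$ and $o_{d_j\alpha,d_j\beta}=o_{\alpha\beta}+\log d_j$. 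Thus \cref{limit_eq2} for $(c_1Y_1,\dots,c_kY_k)$ at $Z$ collapses to $c\big[\sum_j m^j_{X^*}Y_j+\sum_j o^j_{X^*}X^*+(\sum_j\log(c_j/c))X^*\big]=0$, which holds since $X^*$ solves \cref{limit_eq2} and $\sum_j\log(c_j/c)=\log(c_1\cdots c_k)-k\log c=0$; uniqueness gives $M(c_jY_j)=cX^*$.

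\textbf{Part (i).} By (ii) it suffices to treat the tuple $(Y_1,\dots,Y_1,Y_2,\dots,Y_2)$ with $k-l$ copies of $Y_1$ then $l$ of $Y_2$, and by (iii) I may assume $Y_1=I$, so I must show $Z:=I*_{l/k}Y_2$ solves \cref{limit_eq2}. Writing $\alpha=\lambda_{\min}(Y_2)$, $\beta=\lambda_{\max}(Y_2)$, the identities established in the proof of \cref{geodesic_consistency_lemma} give $\lambda_{\max}(ZI^{-1})=\beta^{l/k}$, $\lambda_{\min}(ZI^{-1})=\alpha^{l/k}$, $\lambda_{\max}(Y_2Z^{-1})=\beta^{1-l/k}$, $\lambda_{\min}(Y_2Z^{-1})=\alpha^{1-l/k}$, so the coefficients $m^j_Z,o^j_Z$ take just two explicit values (one for indices pointing to $Y_1=I$, one for those pointing to $Y_2$), each a closed-form function of $\alpha,\beta$. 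Since $Z=\varphi_{\alpha\beta}(l/k)Y_2+\psi_{\alpha\beta}(l/k)I$ and $I,Y_2$ are linearly independent whenever $Y_2\notin\R_{>0}I$, equating the coefficients of $Y_2$ and of $I$ in \cref{limit_eq2} reduces to two scalar identities in $\alpha,\beta$, which I would verify by clearing denominators and checking the terms cancel; the cases $l\in\{0,k\}$ (where $Z\in\{I,Y_2\}$) and $Y_2\in\R_{>0}I$ are immediate. (Equivalently, run \cref{alg:inductive mean} on this tuple from $X_1=Y_1$ and use \cref{geodesic_consistency_lemma} to see $X_{pk+1}=Y_1*_{\tau_p}Y_2$ with $\tau_{p+1}=\frac{(pk+1)\tau_p+l}{(p+1)k+1}$, whose error $\tau_p-l/k$ telescopes to $0$.)

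\textbf{Part (v): the main obstacle.} Here a direct appeal to the implicit function theorem is unavailable, since $\lambda_{\min},\lambda_{\max}$ are only Lipschitz (not $C^1$) at eigenvalue crossings; instead I would combine joint continuity with an a priori bound. Exactly as in \cref{m_lipschitz}, the quantities $m^j_X$ and $o^j_X$ depend jointly continuously on $X$ and $Y_j$, so $G(X;Y_1,\dots,Y_k):=\sum_j m^j_XY_j+\sum_j o^j_XX$ is jointly continuous and, by \cref{cvgence_thm}, $M(Y_1,\dots,Y_k)$ is its unique zero in $X$. Given $(Y^{(m)}_j)_j\to(Y_j)_j$, the construction in the proof of \cref{lim_lemma} expresses $M(Y^{(m)}_1,\dots,Y^{(m)}_k)=\exp\!\big(\tfrac1k\sum_j(m^j_{X^{**}_{m}}+o^j_{X^{**}_{m}})\big)X^{**}_{m}$ with $X^{**}_{m}$ a convex combination of $Y^{(m)}_1,\dots,Y^{(m)}_k$; since for large $m$ these tuples and their convex hulls lie in a fixed compact subset of $\mathbb{S}^n_{++}$, the exponent stays bounded and the points $M(Y^{(m)}_j)$ lie in a common compact $\K_1\subset\mathbb{S}^n_{++}$. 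Any subsequential limit $\bar X\in\K_1$ then satisfies $G(\bar X;Y_j)=\lim_m G(M(Y^{(m)}_j);Y^{(m)}_j)=0$ by joint continuity, hence $\bar X=M(Y_j)$ by uniqueness; since every subsequence has a further subsequence with this limit, $M(Y^{(m)}_j)\to M(Y_j)$, which is the asserted continuity.
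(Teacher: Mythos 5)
Your treatment of parts (ii)--(v) is essentially the same as the paper's: permutation invariance and affine-equivariance from the symmetries of \cref{limit_eq2}, joint homogeneity from the scaling relations $m^j_{cX}=c^{-1}m^j_X$ (note: the paper uses $\tilde m^j_X=c_j^{-1}m^j_X$, matching your $m_{d_j\alpha,d_j\beta}=d_j^{-1}m_{\alpha\beta}$) and $o^j_{cX}=o^j_X-\log c$, and continuity via the compactness/subsequence argument using the a priori bound from the proof of \cref{lim_lemma} together with uniqueness from \cref{cvgence_thm}. Your explicit remark that one needs \emph{joint} continuity in $(X,Y_j)$, obtained by the same argument as \cref{m_lipschitz}, is a point the paper glosses over slightly, so that is a small improvement in precision.

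Where you genuinely diverge is part (i). The paper deliberately avoids \cref{limit_eq2} here: it initializes \cref{alg:inductive mean} at $X_1=Y_1*_{l/k}Y_2$ and uses \cref{geodesic_consistency_lemma} to write down $X_{pk+j}$ in closed form, showing in particular that the subsequence $X_{pk+1}$ is constantly $Y_1*_{l/k}Y_2$, which immediately identifies the limit. Your primary approach (reduce to $Y_1=I$ by affine-equivariance, compute $m^j_Z,o^j_Z$ at $Z=I*_{l/k}Y_2$, and check two scalar identities) does work --- the cancellation in both the $I$- and $Y_2$-coefficient equations indeed goes through --- but you leave the actual verification as ``I would verify by clearing denominators,'' which is the one genuine gap in the writeup. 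Your alternative (start from $X_1=Y_1$, get the recurrence $\tau_{p+1}=\frac{(pk+1)\tau_p+l}{(p+1)k+1}$ from \cref{geodesic_consistency_lemma}, and note $\tau_{p+1}-l/k=\frac{pk+1}{(p+1)k+1}(\tau_p-l/k)\to 0$) is complete and close in spirit to the paper, but the paper's choice of initialization is the cleaner route since it avoids any further convergence argument. If you want a self-contained proof, either finish the scalar computation or adopt the paper's initialization; as written, the primary route has a hole that the alternative fills.
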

\begin{proof}
1. In this case it is perhaps easiest not to prove this property with \cref{limit_eq2} but instead to use \cref{alg:inductive mean} and \cref{geodesic_consistency_lemma}. Taking $X_1=Y_1*_\frac{l}{k}Y_2$ in the algorithm we have, using \cref{geodesic_consistency_lemma} recursively,
\begin{equation*}
    X_{pk+j} = 
    \begin{cases}
    Y_1*_{\frac{l}{k}\frac{pk+1}{pk+j}}Y_2 & \text{ if } 1\leq j \leq k-l \\
    Y_1*_{1-(1-\frac{l}{k})\frac{(p+1)k+1}{pk+j}}Y_2  & \text{ if } k-l+1 \leq j \leq k
    \end{cases}
\end{equation*}
for all $p\in\Z_{\geq0}$. In particular $X_{pk+1} = Y_1*_\frac{l}{k}Y_2$ for all $p\in\Z_{\geq0}$, and thus $X_i \to Y_1*_\frac{l}{k}Y_2$ as $i\to\infty$. 

2. Non-trivial from \cref{alg:inductive mean}, but follows immediately from the symmetries of \cref{limit_eq2}. 

3. Follows from \cref{alg:inductive mean} and \cref{affine_invariant_prop}. We give an alternative proof: writing $\tilde X^* = M(AY_1A^T,\dots, AY_kA^T)$ and $X^* = M(Y_1,\dots,Y_k)$, we have from \cref{limit_eq2}
\begin{equation*}
 m^k_{\tilde X^*}AY_kA^T+\dots+m^1_{\tilde X^*}AY_1A^T+\sum_{j=1}^k o^j_{\tilde X^*} \tilde X^*=0.   
\end{equation*}
Here we used that, for all $1\leq j \leq k$, $AY_jA^T(AXA^T)^{-1} = AY_jX^{-1}A^{-1}$ and $Y_jX^{-1}$ have the same (extreme) eigenvalues, and thus the coefficients $m^j_X$ and $o^j_X$ remain the same as in \cref{limit_eq2}. So we see that $\tilde X^* = AX^*A^T$ satisfies this equation.

4. Again, this is non-trivial from \cref{alg:inductive mean}. Writing $\tilde X^* = M(c_1Y_1,\dots,c_kY_k)$ and $X^* = M(Y_1,\dots,Y_k)$, we have from \cref{limit_eq2}
\begin{equation*}
 \tilde m^k_{\tilde X^*}c_kY_k+\dots+ \tilde m^1_{\tilde X^*}c_1Y_1+\sum_{j=1}^k  \tilde o^j_{\tilde X^*} \tilde X^*=0   
\end{equation*}
where $\tilde m^j_X = c_j^{-1}m^j_X$ and $\tilde o^j_X = o^j_X+\log c_j$ for all $1\leq j \leq k$ and all $X$. So
\begin{equation*}
m^k_{\tilde X^*}Y_k+\dots+ m^1_{\tilde X^*}Y_1+\sum_{j=1}^k  (o^j_{\tilde X^*}+\log c_j) \tilde X^*=0.    
\end{equation*}
Now it suffices to check that $\tilde X^* = (c_1\dots\cdot c_k)^{1/k}X^*$ satisfies this equation, using the relations $m^j_{cX} = cm^j_{X}$ and $o^j_{cX} = o^j_X -\log c$ for $c>0$. 

5. Consider the map $E: (\mathbb{S}^n_{++})^{k+1}\to\mathbb{S}^n_{++}$ given by
\begin{equation*}
       E: (Y_1,\dots,Y_k, X) \mapsto m^k_XY_k+\dots+m^1_XY_1+\sum_{j=1}^k o^j_X X.
\end{equation*}
We have by \cref{limit_eq2}
\begin{equation*}
E(Y_1,\dots,Y_k,M(Y_1,\dots,Y_k)) = 0    
\end{equation*}
for all $Y_1,\dots,Y_k\in\mathbb{S}^n_{++}$. One may try to apply the implicit function theorem to show that $M$ is continuous. However $E$ is not in general differentiable, only continuous (\cref{m_lipschitz}), so the implicit function theorem in its classical form cannot be applied. Instead, take $(Y_{1,i},\dots,Y_{k,i})_{i\geq 1}\subset (\mathbb{S}^n_{++})^{k+1}$ a convergent sequence, converging to $(Y_1,\dots,Y_k)$, say. Then writing $\tilde \C$ for the closure of the convex hull of the bounded set $\{(Y_{1,i},\dots,Y_{k,i}): i\in\N\}$, $\tilde \C$ is closed and bounded so compact. By the proof of \cref{lim_lemma}, $(M(Y_{1,i},\dots,Y_{k,i}))_{i\geq 1}\subset\K$ where
\begin{equation*}
\K = \bigg\{\exp\bigg(\frac{\sum_{j=1}^km^j_X+\sum_{j=1}^ko^j_X}{k}\bigg)X: X\in \tilde \C\bigg\},    
\end{equation*}
which is compact (\cref{m_lipschitz}). Thus, there is a convergent subsequence 
\begin{equation*}
(M(Y_{1,i_l},\dots,Y_{k,i_l}))_{l\geq 1}    
\end{equation*} converging to $M^*$, say. Then by continuity of $E$ we have
$E(Y_1,\dots,Y_k,M^*) = 0$.
But by uniqueness of the solution to \cref{limit_eq2}, $M(Y_1,\dots,Y_k) = M^*$. If $(M(Y_{1,i},\dots,Y_{k,i}))_{i\geq 1}$ did not converge to $M^*$, then it would have another convergent subsequence converging to a $\tilde M^*\neq M^*$. But then
$E(Y_1,\dots,Y_k,\tilde M^*) = 0$,
contradicting the uniqueness of the solution to \cref{limit_eq2}. So $M(Y_{1,i},\dots,Y_{k,i})\to M^*=M(Y_1,\dots,Y_k)$ as $i\to\infty$. So $M$ is continuous.
\end{proof}

\begin{corollary}[Structure preservation]\label{structure_preservation}
    If $S$ is a linear subspace of the space of real symmetric matrices and $\{Y_1\dots,Y_k\}\subset S\cap \mathbb{S}_{++}^n$, then $M(Y_1,\dots,Y_k)\in S$. 
\end{corollary}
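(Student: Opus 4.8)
The plan is to exploit the single structural feature of the $*_t$ geodesic that underlies sparsity preservation: for each fixed $t$, the point $X*_t Y$ is a linear combination of $X$ and $Y$ with real scalar coefficients. Concretely, from \cref{geodesic_eq} (equivalently \cref{Nussbaum geodesic}) we have $X*_tY = \varphi^j_X(t)\,Y + \psi^j_X(t)\,X$ when $Y = Y_j$, so if $X$ and $Y$ both lie in the linear subspace $S$, then $X*_t Y \in S$ as well. Hence $S\cap\mathbb{S}^n_{++}$ is closed under the operation $(X,Y)\mapsto X*_tY$ for every $t$.

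The first step is to note that $S$, being a linear subspace of the finite-dimensional vector space of real symmetric matrices, is closed in the Euclidean topology. The second step is to run \cref{alg:inductive mean} with the particular initialization $X_1 = Y_1$, which lies in $S\cap\mathbb{S}^n_{++}$ by hypothesis. Line~\ref{line3} and the following line of the algorithm set $X_{i+1} = X_i *_{1/(i+1)} Y_j$ for some $1\le j\le k$; since $X_i \in S$ (inductive hypothesis) and $Y_j\in S$, the linear-combination observation gives $X_{i+1}\in S$, so by induction $X_i\in S$ for all $i\ge 1$. The third step is to invoke \cref{cvgence_thm}: this sequence converges to $M(Y_1,\dots,Y_k) = X^*$, and since $S$ is closed the limit lies in $S$. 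This completes the argument.

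There is essentially no obstacle here; the corollary is a direct consequence of the linearity of the $*_t$ geodesic combined with the initialization-independence established in \cref{cvgence_thm}. The only point that requires a moment's care is that the initialization must be chosen inside $S$, which is possible precisely because the $Y_j$ themselves belong to $S\cap\mathbb{S}^n_{++}$; had we instead relied on the characterization of $X^*$ via \cref{limit_eq2}, we would have had to re-run the fixed-point and contraction arguments within $S$, which is unnecessary.
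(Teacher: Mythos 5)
Your proof is correct and is the natural argument the paper intends: since $X*_tY$ is a linear combination of $X$ and $Y$, initializing \cref{alg:inductive mean} at $X_1=Y_1\in S$ keeps the whole sequence inside the closed subspace $S$, and \cref{cvgence_thm} (initialization independence) identifies the limit with $M(Y_1,\dots,Y_k)$. The remark about re-running the fixed-point argument in $S$ had you instead relied on \cref{limit_eq2} is a fair observation, but, as you say, unnecessary here.
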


\begin{remark}
    By \cref{structure_preservation}, the inductive Thompson mean defined by \cref{cvgence_thm} preserves many common matrix structures. In particular, the inductive Thompson mean of a collection of banded, Toeplitz, and Hankel matrices will be a unique banded, Toeplitz, and Hankel matrix, respectively. This is in contrast to the Riemannian (geometric) mean, which generally fails to preserve such structures. While there have been efforts to define structure-preserving geometric means by restricting the Riemannian barycenter computation to such subspaces, there is generally no guarantee that the result will be unique \cite{Bini2014Structured}.
\end{remark}

\begin{corollary}[Sparsity preservation I]\label{sparsity_preservation_1}
If $\{Y_1\dots,Y_k\}$ is a set of SPD matrices with the same sparsity pattern (i.e., with non-zero elements restricted to a common set of entries), then $M(Y_1,\dots,Y_k)$ has the same sparsity pattern.
\end{corollary}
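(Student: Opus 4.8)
The plan is to reduce this to \cref{structure_preservation}. A common sparsity pattern is nothing but a transposition-symmetric set $E\subseteq\{1,\dots,n\}^2$ of index pairs such that every $Y_j$ has all of its (possibly) nonzero entries at positions in $E$. Associated to $E$ is the set
\begin{equation*}
S_E=\{Z\in\mathbb{S}^n_{++}\text{-ambient space}:\ Z=Z^T,\ Z_{ab}=0 \text{ whenever } (a,b)\notin E\}
\end{equation*}
of real symmetric matrices supported on $E$; it is plainly closed under addition and scalar multiplication, hence a linear subspace of the space of real symmetric matrices, and by hypothesis $\{Y_1,\dots,Y_k\}\subset S_E\cap\mathbb{S}^n_{++}$.

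Then I would simply apply \cref{structure_preservation} with $S=S_E$ to conclude $M(Y_1,\dots,Y_k)\in S_E$, i.e.\ every entry of $M(Y_1,\dots,Y_k)$ at a position outside $E$ vanishes. This is exactly the assertion that $M(Y_1,\dots,Y_k)$ has the same sparsity pattern in the sense made precise by the parenthetical of the corollary (nonzero elements restricted to the common set $E$). The one remark worth including is that, as with \cref{structure_preservation}, this gives containment of the support in $E$ rather than equality: cancellations in the real linear combinations defining the Thompson geodesics (see \cref{geodesic_eq}), and hence in the iterates of \cref{alg:inductive mean}, could in principle annihilate an entry, so one should not expect the support to be exactly $E$ in general.

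There is no genuine obstacle here: all the content sits in \cref{structure_preservation}, and this corollary is merely the specialization of that result to the subspaces $S_E$ cut out by prescribed vanishing-entry conditions. If a self-contained argument were preferred over invoking \cref{structure_preservation}, one could instead note directly from \cref{geodesic_eq} that each step $X_{i+1}=X_i*_{1/(i+1)}Y_j$ is a real linear combination of $X_i$ and $Y_j$, so choosing $X_1\in S_E$ forces $X_i\in S_E$ for all $i$; since $S_E$ is a closed subset, the limit $X^*=M(Y_1,\dots,Y_k)$ guaranteed by \cref{cvgence_thm} also lies in $S_E$. But routing through \cref{structure_preservation} is the cleaner route and I would present it that way.
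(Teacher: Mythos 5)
Your proposal is correct and follows exactly the route the paper intends: the common sparsity pattern $E$ cuts out a linear subspace $S_E$ of the real symmetric matrices, so \cref{structure_preservation} applies directly. Your parenthetical caveat (that the conclusion is containment of the support in $E$, not equality) is a fair reading of what the corollary actually asserts and matches how the preceding corollary should be interpreted as well.
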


\begin{corollary}[Sparsity preservation II]\label{sparsity_preservation_2}
If $\{Y_1\dots,Y_k\}\subset \mathbb{S}_{++}^n$ is a set of sparse SPD matrices with $k<<n^2$, then $M(Y_1,\dots,Y_k)$ is sparse.
\end{corollary}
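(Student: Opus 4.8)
The plan is to deduce from the characterisation in \cref{cvgence_thm} that $X^* = M(Y_1,\dots,Y_k)$ lies in the linear span of $Y_1,\dots,Y_k$, and then to use the elementary fact that the set of entries at which a linear combination of matrices is nonzero is contained in the union of the corresponding sets for the summands. Concretely, \cref{limit_eq2} says $\sum_{j=1}^k m^j_{X^*}Y_j + \mu X^* = 0$, where $\mu := \sum_{j=1}^k o^j_{X^*}$; as soon as $\mu \neq 0$ we may divide through to get
\[
X^* = \sum_{j=1}^k \frac{m^j_{X^*}}{-\mu}\, Y_j .
\]

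The only step requiring an argument is that $\mu \neq 0$. I would take the trace of \cref{limit_eq2}, obtaining $\sum_{j=1}^k m^j_{X^*}\operatorname{tr}(Y_j) + \mu\operatorname{tr}(X^*) = 0$. Each $m^j_{X^*}$ is strictly positive (recorded just after \cref{derivatives}), each $\operatorname{tr}(Y_j)$ is strictly positive, and $\operatorname{tr}(X^*)$ is strictly positive since the matrices are positive definite; hence the first sum is positive, which forces $\mu < 0$. In particular $\mu \neq 0$ and the coefficients $m^j_{X^*}/(-\mu)$ are all positive, so $X^*$ is a (positive) linear combination of $Y_1,\dots,Y_k$. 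Consequently the nonzero entries of $X^*$ occur only at positions where at least one $Y_j$ is nonzero, so if $Y_j$ has $\nu_j$ nonzero entries then $X^*$ has at most $\sum_{j=1}^k \nu_j \leq k \max_j \nu_j$ of them; sparsity of the $Y_j$ and $k \ll n^2$ make this $\ll n^2$, i.e.\ $X^*$ is sparse.

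I expect the sign bookkeeping ($\mu \neq 0$) to be the only genuine obstacle, since it is what legitimises solving \cref{limit_eq2} for $X^*$; the remainder is immediate. I would also mention an alternative route to the span membership that bypasses \cref{limit_eq2}: every iterate of \cref{alg:inductive mean} satisfies $X_{i+1} = \varphi^j_{X_i}(\tfrac{1}{i+1})Y_j + \psi^j_{X_i}(\tfrac{1}{i+1})X_i$ and thus lies in $\operatorname{span}\{X_1,Y_1,\dots,Y_k\}$, which is closed, so $X^*$ lies in it as well; choosing $X_1 = Y_1$ and using that the limit is independent of the initialisation (\cref{cvgence_thm}) yields $X^* \in \operatorname{span}\{Y_1,\dots,Y_k\}$. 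The same observation simultaneously proves \cref{structure_preservation} and \cref{sparsity_preservation_1}.
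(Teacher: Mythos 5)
Your proof is correct, and it fills in details the paper leaves implicit: \cref{sparsity_preservation_2} (like \cref{structure_preservation} and \cref{sparsity_preservation_1}) is stated without proof, the intended point being simply that $M(Y_1,\dots,Y_k)$ lies in the linear span of the $Y_j$, so its support is contained in the union of their supports. Your reduction via \cref{limit_eq2} is a valid way to see this, and the one genuinely nontrivial step you supply is the observation that $\mu := \sum_{j=1}^k o^j_{X^*}$ is nonzero, which is what legitimises solving \cref{limit_eq2} for $X^*$; the trace computation cleanly gives $\mu<0$, and as a by-product shows $X^*$ is a \emph{positive} combination of the $Y_j$, which is a slightly stronger structural fact than the corollaries state. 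Your alternative argument---each iterate of \cref{alg:inductive mean} is by \cref{geodesic_eq} a linear combination of the previous one and some $Y_j$, so all iterates lie in $\operatorname{span}\{X_1,Y_1,\dots,Y_k\}$, a finite-dimensional (hence closed) subspace, and therefore so does the limit; taking $X_1=Y_1$ and invoking independence of initialisation in \cref{cvgence_thm} gives $X^*\in\operatorname{span}\{Y_1,\dots,Y_k\}$---is the most economical route to all three of \cref{structure_preservation}, \cref{sparsity_preservation_1}, and \cref{sparsity_preservation_2} simultaneously, and it sidesteps the sign bookkeeping entirely. Either way the argument goes through.
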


\begin{remark}
We note that \cref{cvgence_thm} and \cref{properties_thm} continue to hold if we replace symmetric positive definite matrices with Hermitian positive definite matrices, with the only notable change being the need to use  conjugate transpose instead of transpose when defining properties such as affine-invariance.
\end{remark}

\section{Conclusions}
\label{sec:conclusions}

The Hilbert and Thompson metrics in the positive semidefinite cone provide a route to non-Euclidean geometries based on extreme generalized eigenvalue computations. We have seen that by focusing on a particular choice of geodesic of the Thompson metric with attractive computational properties, we can view $(\mathbb{S}^n_{++},d_T)$ as a semihyperbolic geodesic space. We have noted several interesting properties of this distinguished Thompson geodesic, including the preservation of sparsity. Significantly, we have defined an inductive mean of any finite collection of SPD matrices based on the computation of a sequence of extreme generalized eigenvalues. Furthermore, we have proved that this new mean exists and is unique for any given finite collection of SPD matrices. Finally, we have established several important properties that are satisfied by this mean, including permutation invariance, affine-equivariance, and joint homogeneity. We hope that these contributions will provide a foundation for a computationally scalable geometric statistical framework for the processing of large SPD-valued data.


\bibliographystyle{siamplain}
\bibliography{references}
\end{document}